\documentclass[a4paper,12pt]{amsart}
\usepackage{amsthm,amssymb}
\usepackage[T1]{fontenc}
\usepackage[utf8]{inputenc}
\usepackage{enumitem}
\usepackage{mathrsfs}
\usepackage{mathtools}
\usepackage{enumitem}
\usepackage[margin=3.25cm]{geometry}
\usepackage[numbers, sort&compress]{natbib}
\usepackage[all]{xy}
\usepackage{xspace}
\usepackage{caption}
\numberwithin{equation}{section}

\usepackage[hypertexnames=false]{hyperref}
\hypersetup{
    colorlinks,
    linkcolor={red!90!black},
    citecolor={green!70!black},
    urlcolor={blue!80!black}
}

\usepackage{tikz}

\newtheorem{theorem}{Theorem}[section]
\newtheorem{lemma}[theorem]{Lemma}

\newtheorem{corollary}[theorem]{Corollary}

\newtheoremstyle{mytheoremstyle} 
    {1em plus .2em minus .1em}                    
    {1em plus .2em minus .1em}                    
    {\rmfamily}                   
    {}                           
    {\bfseries}                   
    {.}                          
    {.5em}                       
    {}  

\theoremstyle{mytheoremstyle}

\newtheorem{example}[theorem]{Example}

\newcommand{\rarrow}{\rightarrow}
\newcommand{\QED}{\hfill$\dashv$}
\newcommand{\zero}{\mathbf{0}} 
\newcommand{\one}{\mathbf{1}} 
\newcommand\defeq{\coloneqq} 
\newcommand\iffdef{\;\mathrel{\mathord{:}\mathord{\longleftrightarrow}}\;}
\newcommand\defeqtt{\mathtt{df}\,}

\newcommand\klam[1]{\left\langle#1\right\rangle}
\newcommand\mfr{\mathfrak}

\newcommand{\rsc}{\mathfrak{R^{\mathtt{rsc}}}}
\newcommand{\rs}{\mathfrak{R^{\mathtt{rs}}}}
\newcommand\frB{\mfr{B}}
\newcommand\frR{\mfr{R}}
\DeclareMathOperator{\RC}{RC}
\DeclareMathOperator{\con}{\mathrel{\mathsf{C}}}
\newcommand{\conbar}{\mathrel{\overline{\con}}}
\newcommand{\notconbar}{\mathrel{(-\!\conbar)}}
\newcommand{\notcon}{\mathrel{(-\!\con)}} 
\newcommand{\ConRel}{\ensuremath{\mathfrak{C}}\xspace}
\DeclareMathOperator{\overl}{\mathsf{O}}
\DeclareMathOperator{\Cl}{Cl}
\DeclareMathOperator{\upop}{\uparrow} 

\DeclareMathOperator{\Clan}{\mathsf{Clan}} 
\DeclareMathOperator{\Atom}{At}
\newcommand{\Implies}{\Rightarrow}
\newcommand{\Iff}{\Longleftrightarrow}
\newcommand{\tand}{\text{ and }}
\newcommand{\tor}{\text{ or }}
\newcommand{\qtiff}{\quad\text{iff}\quad}
\newcommand{\timplies}{\text{ implies }}
\newcommand{\set}[1]{\ensuremath{\{#1\}}}
\newcommand{\z}{\emptyset}
\newcommand{\wlg}{w.l.o.g.\xspace }
\newcommand{\Prim}{\mathsf{Prim}}
\newcommand{\PrimI}{\mathsf{PrimI}}
\newcommand{\PrimF}{\mathsf{PrimF}}
\newcommand{\tiff}{\text{\ if and only if\ }\xspace}
\newcommand{\aright}{``$\Rightarrow$'': \ }
\newcommand{\aleft}{``$\Leftarrow$'': \ }
\newcommand{\onto}{\twoheadrightarrow}
\newcommand{\into}{\hookrightarrow}
\newcommand\frD{\mfr{D}}
\newcommand{\ua}[1]{\ensuremath{\mathop{\uparrow}#1}}
\newcommand{\bL}{\ensuremath{\frB(L)}\xspace}
\newcommand{\dL}{\ensuremath{\frD(L)}\xspace}
\newcommand{\sbe}{``$\subseteq$'': \ }
\newcommand{\spe}{``$\supseteq$'': \ }
\newcommand{\da}[1]{\ensuremath{\mathop{\downarrow}#1}}
\newcommand{\Base}{\ensuremath{\mathtt{B}}}
\renewcommand{\S}{\mathcal{S}}
\DeclareMathOperator{\underl}{\mathrel{\mathsf{U}}}
\DeclareMathOperator{\Filt}{\mathsf{Filt}}
\DeclareMathOperator{\Intr}{Int}

\title[Contact lattices and extensions]{Contact relations on bounded distributive lattices and related structures}

\author[]{Ivo D\"untsch and Rafa\l\ Gruszczy\'nski}

\date{}

\address{Ivo D\"untsch\\
Department of Computer Science\\
Brock University\\	
St Ca\-tha\-ri\-nes, Ontario\\
Canada\\
\textsc{Orcid:} 0000-0001-8907-2382}

\email{duentsch@brocku.ca}

\urladdr{https://www.cosc.brocku.ca/~duentsch/}

\address{Rafa\l\ Gruszczy\'nski, \textsc{Orcid:} 0000-0002-3379-0577\\
Department of Logic\\
Institute of Philosophy\\
Nicolaus Copernicus University in Toru\'n\\
Poland}

\email{gruszka@umk.pl}

\begin{document}

\begin{abstract}

We study a discrete representation of contact relations on bounded distributive lattices and some of their reducts. In the course of the paper, we apply ideas and techniques developed by Ivo D\"untsch, Dimiter Vakarelov, and Michael Winter, and we show---among others---that contact relations on a distributive join semi-lattice $L$ with the bottom element correspond to reflexive and symmetric relations which are closed in the product of the Stone space of~$L$.

\smallskip

\noindent Keywords: contact relations, contact algebras, discrete representation

\smallskip

\noindent MSC 2020: Primary 06B15, Secondary 03G10

\end{abstract}

\maketitle

\section{Introduction}\label{sec:intro}

The roots of lattice based algebras with a contact relation can be found in the works of \citet{les_math} on the ``part-of'' relation of mereology, and, on the other hand, in a region-based approach to geometry, where regions instead of points are taken as the basic entity \cite{deLaguna-PLS,Nicod-GITSW,Tarski-FGC,Whitehead-PR}. In this ``pointless geometry'', points are now second order definable as sets of regions, similar to the representation of Boolean algebras, where points can be recovered as ultrafilters. De Laguna's \cite{deLaguna-PLS}, Nicod's \cite{Nicod-GITSW}, and later Whitehead's \citep{Whitehead-PR} addition  to the mereological structures of \citeauthor{les_math} (which arise from  complete Boolean algebras when the smallest element is removed) was a ``connection'' (or ``contact'') relation $\con$ among nonempty regions, which, in its simplest form, is a reflexive and binary relation  that satisfies a compatibility axiom with the order of the algebra as well as distributivity over finite joins.
Based on the work of \citeauthor{les_math} and \citeauthor{Whitehead-PR}, \citet{Clarke-CIBC} presented an axiom system for a ``Calculus of individuals'' founded on an ``in contact with'' relation. The intended domain is such that
\begin{sloppypar}
\begin{quote}
``\ldots we may interpret the individual variables as ranging over
spatial-temporal regions and the two-place primitive `$x$ is connected
with $y$' as a rendering of `$x$ and $y$ share a common point'.''
\cite[p.205]{Clarke-CIBC}.
\end{quote}
\end{sloppypar}
In a parallel development, \emph{proximity structures} have been investigated in a topological context since the 1950s. Proximity spaces are relational structures on families of sets that satisfy axioms which to some extent coincide with those for Clarke's contact structures \citep{Naimpally-et-al-PS}.

Standard models of contact structures are collections of regular open sets of topological spaces
$\klam{X,\tau}$ with the \emph{standard (Whiteheadian) contact} $\con$ among regions, which is defined by
\begin{gather*}
u\con v \iffdef \Cl(u) \cap \Cl(v) \neq \z.
\end{gather*}
Here $u \con v$ denotes the fact that the open sets $u$ and $v$ are in contact, and $\Cl(u)$ and $\Cl(v)$ denote the closure of $u$ and $v$, respectively. Notice that it is possible for open sets to be in contact and having an empty intersection, e.g., two open discs in the Euclidean plane that have overlapping boundaries. Due to isomorphisms between the algebras of regular open and regular closed subsets of $X$, the models built from the latter sets are, so to speak, equivalent, with the contact defined by
\begin{gather*}
u\con v \iffdef u \cap v \neq \z.
\end{gather*}

The abstract version of such standard models are \emph{Boolean contact algebras} (BCAs, for short), that is, Boolean algebras augmented by a contact relation. It was shown that each such algebra has a representation as a subalgebra of the Boolean algebra of regular open (or closed) sets of a topological space with standard contact \cite{Dimov-et-al-CARBTSPA1}. As the notion of ``complement of a region'' was not immediately intuitive, other lattice based algebras with a contact relation were investigated. It turned out, that the topological representation of such structures as collections of regular open (or closed) sets with standard contact was rather limited \cite{Duntsch-et-al-DCLTR}. An alternative representation of Boolean contact algebras by symmetric and reflexive relations on its spectral space was investigated in \cite{Duntsch-et-al-RBTODSAPA} and \cite{dw_cl}, and it is this representation that we shall consider in the present article for structures weaker than Boolean algebras.

The paper is structured as follows. After setting our notation and basic definitions in Section \ref{sec:clan}, we shall introduce contact relations on bounded distributive lattices in Section \ref{sec:contact}. In the following section we review the standard representation for Boolean contact algebras and demonstrate that it will not work unchanged for weaker structures. In Section \ref{sec:discrep} we introduce discrete representations and show that each bounded distributive lattice has such a representation. We show that this result can be generalized to distributive join-semilattices with zero. The structure of the collection of contact relations is exhibited in Section~\ref{sec:conlat}, followed by some closing remarks.

\section{First definitions and notation}\label{sec:clan}

We assume that the reader has a basic knowledge of lattice theory and topology. For structures $S_0,S_1$  of the same type we write $S_0 \leq S_1$, if $S_0$ is a substructure of $S_1$. A binary relation $R$ on a partially orderd set $\klam{U,\leq}$ is called \emph{extensional} if
\begin{gather}\tag{Ext}\label{C5}
R(x) \subseteq R(y) \timplies x \leq y;
\end{gather}
here, $R(x) \defeq \set{y: x \mathrel{R} y}$. The identity relation on $U$ will be denoted by $1'_U$ or simply by $1'$ if $U$ is understood. With some abuse of notation we will usually identify structures with their universe, if no confusion can arise. Throughout,  $\klam{L, + , \cdot, \zero, \one}$ is a non-trivial bounded distributive lattice unless otherwise indicated.  $L^+$ denotes the set of non-zero elements of $L$. If $a \in L$, then $\ua{a} \defeq \set{b: a \leq b}$ is the principal filter generated by $a$.  We denote the set of proper filters of $L$ by $\Filt(L)$ and the set of prime filters by $\Prim(L)$; these sets are partially ordered by $\subseteq$. Maximal elements of $\Filt(L)$ are called \textit{ultrafilters}. By Zorn's Lemma, each filter is contained in a maximal filter which turns out to be prime in a distributive lattice. If $\klam{L,-}$ is a Boolean algebra, then each prime filter is an ultrafilter, consequently, the order on $\Prim(L)$ is the identity relation.

A \emph{p-algebra} $\klam{L,+,\cdot, {}^\ast, \zero, \one}$ is a structure of type $\klam{2,2,1,0,0}$ such that the reduct $\klam{L,+,\cdot, \zero, \one}$ is a bounded distributive lattice\footnote{In the literature a p-algebra is not always distributive.}, and the operation ${}^\ast$ is \emph{pseudocomplementation}, i.e.
\begin{gather}\label{def:*}
a \cdot b = \zero \Iff b \leq a^\ast.
\end{gather}
It is well known, that the class of p-algebras is equational, see e.g. \cite[Chapter VIII]{bd74}, where further properties of p-algebras can be found. The following sets are decisive for a p-algebra:
\begin{xalignat*}{2}
\bL &\defeq \set{a^\ast: a \in L}, &&\text{the \emph{skeleton}}, \\
\dL &\defeq \set{a \in L: a^\ast = \zero}, &&\text{the \emph{set of dense elements}.}
\end{xalignat*}
The following properties of p-algebras are well known \cite[Chapter VIII]{bd74}:
\begin{lemma}\label{lem:propL}
Let $L$ be a p-algebra. Then,
\begin{enumerate}
\item $\klam{\bL, \lor, \land, - , \zero, \one}$ is a Boolean algebra with
\begin{gather*}
a \land b \defeq a \cdot b, \ a \lor b \defeq (a+b)^{\ast\ast}, \tand -a \defeq a^\ast.
\end{gather*}
 The map $r_L\colon L \onto \frB(L)$ defined by $r_L(a) \defeq a^{\ast\ast}$ is a surjective homomorphism and $\frB(L) \cong L/\frD(L)$.
\item $\frD(L)$ is a filter of $L$, and $d \in\frD(L)$ if and only if $a \cdot d \neq \zero$ for all $a \neq \zero$.
\item $a \cdot b = \zero$ \tiff $a^{\ast\ast} \cdot b^{\ast\ast} =\zero$.
\end{enumerate}
\end{lemma}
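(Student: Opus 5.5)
We will derive everything from the defining equivalence \eqref{def:*}, using only the standard consequences that follow from it at once: $a \leq a^{\ast\ast}$, the map $a \mapsto a^\ast$ is antitone, and hence $a^{\ast\ast\ast} = a^\ast$. The plan is to prove (3) and (2) first, since they are short, and then do the more structural part (1).

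For (3), note that $a \cdot b = \zero$ iff $b \leq a^\ast$ iff $b \leq a^{\ast\ast\ast}$ iff $a^{\ast\ast}\cdot b = \zero$. Applying the same step in the other variable gives $a^{\ast\ast}\cdot b^{\ast\ast}=\zero$.

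For (2), $d^\ast = \zero$ iff no nonzero $a$ satisfies $a \leq d^\ast$, i.e. iff $a \cdot d \neq \zero$ for all $a\neq\zero$. To see that $\frD(L)$ is a filter:
\begin{itemize}
\item it is upward closed by antitonicity of ${}^\ast$;
\item it is closed under meets, because if $d_1,d_2$ are dense and $a\neq \zero$, then $a d_1 \neq \zero$, hence $(a d_1) d_2 \neq\zero$;
\item it contains $\one$ and is proper, since $\zero^\ast = \one$.
\end{itemize}

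For (1), the key identity is $(ab)^{\ast\ast} = a^{\ast\ast} b^{\ast\ast}$.
\begin{itemize}
\item The inequality $\leq$ follows from monotonicity.
\item For $\geq$, the aim is to show $a^{\ast\ast}b^{\ast\ast}\cdot (ab)^\ast = \zero$. Since $ab\cdot(ab)^\ast=\zero$, we get $a \cdot b(ab)^\ast = \zero$, hence $b(ab)^\ast \leq a^\ast$. Therefore $a^{\ast\ast}\cdot b\cdot (ab)^\ast = \zero$, i.e. $b\cdot a^{\ast\ast}(ab)^\ast=\zero$. Repeating the trick with $b$ gives $b^{\ast\ast} a^{\ast\ast}(ab)^\ast = \zero$, and so $a^{\ast\ast}b^{\ast\ast} \leq (ab)^{\ast\ast}$.
\end{itemize}
Consequently $\bL$, which is the set of fixed points of the closure operator ${}^{\ast\ast}$, is closed under $\cdot$. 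Moreover $\bL$ is a complete-lattice-style closure system, in which $(a+b)^{\ast\ast}$ is the least fixed point above $a,b$.

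Next we verify that $-a = a^\ast$ is a Boolean complement in $\bL$:
\begin{itemize}
\item $a \cdot a^\ast = \zero$ holds by definition.
\item For $a \lor a^\ast = (a+a^\ast)^{\ast\ast}$, we use $(a+a^\ast)^\ast = a^\ast a^{\ast\ast} = \zero$, together with the De Morgan law $(x+y)^\ast = x^\ast y^\ast$, which is immediate from \eqref{def:*} and distributivity. Hence $a \lor a^\ast = \zero^\ast=\one$.
\end{itemize}

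The main obstacle is distributivity of $\bL$. We plan to show $r_L$ is a lattice homomorphism onto $\bL$:
\begin{itemize}
\item Meets are preserved by the key identity above.
\item Joins are preserved because $(a+b)^{\ast\ast} = (a^{\ast\ast}+b^{\ast\ast})^{\ast\ast}$, which follows from De Morgan and $x^{\ast\ast\ast}=x^\ast$.
\end{itemize}
A lattice-homomorphic image of a distributive lattice is distributive, so $\bL$ is a Boolean algebra. Surjectivity holds since $r_L$ fixes $\bL$.

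Finally, for $\frB(L)\cong L/\frD(L)$ we check that the kernel congruence of $r_L$ is the congruence induced by the filter $\frD(L)$, i.e. that $a^{\ast\ast}=b^{\ast\ast}$ iff $ad=bd$ for some dense $d$.
\begin{itemize}
\item For "if", apply the meet-preservation of $r_L$: $a^{\ast\ast}=a^{\ast\ast}d^{\ast\ast} = (ad)^{\ast\ast}$, since $d^{\ast\ast}=\one$, and likewise for $b$.
\item For "only if", take $d = (a+a^\ast)(b+b^\ast)$, which is dense by the computation above. Then $ad = a(b+b^\ast) = ab + ab^\ast$. Here $ab^\ast \leq a^{\ast\ast}b^\ast = b^{\ast\ast}b^\ast = \zero$, so $ad = ab$. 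Symmetrically $bd = ab$, so $ad = bd$.
\end{itemize}
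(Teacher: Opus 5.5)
Your proof is correct. The paper gives no argument of its own for this lemma; it quotes it as well known from Balbes--Dwinger, Chapter~VIII. Your derivation is essentially the standard one found there:
\begin{itemize}
\item first the elementary rules $a\le a^{\ast\ast}$, antitonicity, $a^{\ast\ast\ast}=a^\ast$, the De~Morgan law $(x+y)^\ast=x^\ast y^\ast$, and $(ab)^{\ast\ast}=a^{\ast\ast}b^{\ast\ast}$;
\item then Glivenko's argument that $r_L$ is a homomorphism onto the skeleton whose kernel is the congruence of the dense filter.
\end{itemize}
What your version adds is self-containedness. In particular, you prove the identity $(a+b)^{\ast\ast}=(a^{\ast\ast}+b^{\ast\ast})^{\ast\ast}$, which the paper later imports from Balbes--Dwinger (Theorem~VIII.2.1(ix)) when extending a contact relation from $\bL$ to $L$. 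Your choice $d=(a+a^\ast)(b+b^\ast)$ for the kernel identification is a neat way to get an explicit dense witness.

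There are only cosmetic points to tidy up.
\begin{itemize}
\item In (2), $\one\in\frD(L)$ holds because $\one^\ast=\zero$ (equivalently $a\cdot\one=a\neq\zero$); the fact $\zero^\ast=\one$ you cite gives only properness.
\item The phrase ``complete-lattice-style closure system'' is inaccurate, since $L$, and hence $\bL$, need not be complete. All you actually use is that $(a+b)^{\ast\ast}$ is the least element of $\bL$ above $a$ and $b$, and that you do justify.
\item You should record that $r_L(\zero)=\one^\ast=\zero$ and $r_L(\one)=\zero^\ast=\one$. If ``homomorphism'' is meant in the p-algebra signature, also note $r_L(a^\ast)=a^{\ast\ast\ast}=(a^{\ast\ast})^\ast=-r_L(a)$.
\end{itemize}
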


The following observation has appeared in various places, see \cite{var82}.
\begin{lemma}\label{lem:max}
Let $F$ be a proper filter of the p-algebra $L$. Then, the following are equivalent:
\begin{enumerate}
\item $F$ is maximal.
\item $a \not\in F$ implies $a^\ast \in F$ for all $a \in L$.
\item $F$ is prime, and $a^{\ast\ast} \in F$ implies $a \in F$ for all $a \in L$.
\item $F$ is prime, and $\frD(L) \subseteq F$.
\end{enumerate}
\end{lemma}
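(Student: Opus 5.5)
I will prove the equivalences by the cycle (1) $\Rightarrow$ (2) $\Rightarrow$ (3) $\Rightarrow$ (4) $\Rightarrow$ (1). The only tools needed are the defining property \eqref{def:*} of pseudocomplementation, the facts in Lemma~\ref{lem:propL} (especially that $\frD(L)$ is a filter, and part (2) characterising dense elements), and the standard identities $a \cdot a^\ast = \zero$, $a \leq a^{\ast\ast}$, $a^{\ast\ast\ast}=a^\ast$.

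For (1) $\Rightarrow$ (2), let $F$ be maximal and $a \notin F$. Then the filter generated by $F \cup \set{a}$ strictly contains $F$, so it is improper. Hence $f \cdot a = \zero$ for some $f \in F$. By \eqref{def:*} this gives $f \leq a^\ast$, so $a^\ast \in F$.

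For (2) $\Rightarrow$ (3), I first show primeness. Suppose $a+b \in F$ but $a,b \notin F$. Then $a^\ast, b^\ast \in F$, so $a^\ast \cdot b^\ast \in F$. Now $(a+b)\cdot a^\ast b^\ast = a a^\ast b^\ast + b a^\ast b^\ast = \zero$, so $\zero \in F$, contradicting properness. Next suppose $a^{\ast\ast} \in F$ and $a \notin F$. Then $a^\ast \in F$, and $a^\ast \cdot a^{\ast\ast} = \zero \in F$, again a contradiction.

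For (3) $\Rightarrow$ (4), let $d \in \frD(L)$. Then $d^\ast = \zero$, so $d^{\ast\ast} = \zero^\ast = \one \in F$. By (3) it follows that $d \in F$.

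For (4) $\Rightarrow$ (1), this is the step needing the most care. Suppose $F \subseteq G$ with $G$ a proper filter, and pick $a \in G$. I will show $a \in F$.
\begin{enumerate}
\item The element $a + a^\ast$ is dense. Indeed $(a+a^\ast)^\ast = a^\ast \cdot a^{\ast\ast} = \zero$, using the De Morgan law $(x+y)^\ast = x^\ast y^\ast$, which follows directly from \eqref{def:*}.
\item Hence $a + a^\ast \in \frD(L) \subseteq F$. Since $F$ is prime, $a \in F$ or $a^\ast \in F$.
\item If $a^\ast \in F \subseteq G$, then $a \cdot a^\ast = \zero \in G$, contradicting that $G$ is proper. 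So $a \in F$.
\end{enumerate}
Thus $G = F$, and $F$ is maximal.

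The main point to get right is the density of $a + a^\ast$. It relies on the join De Morgan law for ${}^\ast$ and $a^{\ast\ast\ast}=a^\ast$, both routine consequences of \eqref{def:*} in p-algebras (see \cite[Chapter VIII]{bd74}).
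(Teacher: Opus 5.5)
Your proof is correct and complete. The paper gives no proof of this lemma; it records it as a known observation and points to the literature, so there is no in-paper argument to compare against.

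Your cycle (1)$\Rightarrow$(2)$\Rightarrow$(3)$\Rightarrow$(4)$\Rightarrow$(1) is the standard one. Each step holds using only \eqref{def:*} and distributivity:
\begin{itemize}
\item In (1)$\Rightarrow$(2), the filter generated by $F\cup\set{a}$ is improper, which forces $f\cdot a=\zero$ for some $f\in F$.
\item In (2)$\Rightarrow$(3), primeness follows because $(a+b)\cdot a^\ast b^\ast=\zero$.
\item In (3)$\Rightarrow$(4), you use $\zero^\ast=\one$.
\item The key step (4)$\Rightarrow$(1) rests on the density of $a+a^\ast$. Primeness then gives $a\in F$ or $a^\ast\in F$, and the second option makes the larger filter $G$ improper.
\end{itemize}

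Two small overstatements in your framing are harmless. First, the computation $(a+a^\ast)^\ast=a^\ast\cdot a^{\ast\ast}=\zero$ needs only the join De Morgan law and $x\cdot x^\ast=\zero$ with $x=a^\ast$. The identity $a^{\ast\ast\ast}=a^\ast$ that you list at the end is never used. Second, the facts from Lemma~\ref{lem:propL} that you announce as tools (that $\frD(L)$ is a filter, and part (2)) also play no role.

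What your write-up buys over the paper is self-containedness. The paper later uses (1)$\Rightarrow$(3) for maximal filters when analysing transitive relations on p-algebras, and with your argument that use rests on a proof in the text rather than on an external reference.
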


Our standard reference for distributive lattices is \cite{bd74}, for Boolean algebras (BAs) \cite{Koppelberg-GTBA}, and for topology \cite{eng77} where all unexplained concepts can be found.

Finally in this section, we consider the topological representation of bounded distributive lattices. Representation theorems for lattice-based structures $S$ often are based on an embedding $h\colon S \to 2^{M}$, where $M$ is a collection of subsets of $S$, for example, ultrafilters in the case of Boolean algebras, prime filters for distributive lattices, and more involved structures in the case of non-distributive lattices. The sets $h(a)$ are taken as a basis for the open sets of a topology. The first such representations for lattice based structures were those by \citet{Stone-TRBA,stone_dl} for bounded distributive lattices and Boolean algebras, of which we recall the former:
\begin{theorem} [{\citealp{stone_dl}}]\label{thm:repDL}
Let $L$ be a bounded distributive lattice, $\Prim(L)$ be the set of its prime filters, and $h\colon L \to 2^{\Prim(L)}$ be defined by $h(a) \defeq \set{F \in \Prim(L): a \in F}$. Then,
\begin{enumerate}
\item $h$ is a lattice embedding into the set algebra $2^{\Prim(L)}$.
\item The collection $\set{h(a): a \in L}$ forms a basis for the open sets of a compact $T_0$-topology $\tau$ on $\Prim(L)$ and each set $h(a)$ is compact open.
\end{enumerate}
\end{theorem}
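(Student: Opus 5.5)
We prove the two parts separately; the one substantive ingredient is the prime filter theorem for distributive lattices, which we obtain from Zorn's Lemma.

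\textbf{Homomorphism properties.} Since every $F \in \Prim(L)$ is a proper filter, we have $\zero \notin F$ and $\one \in F$, so $h(\zero)=\z$ and $h(\one)=\Prim(L)$. Because filters are upward closed and closed under meets, $a\cdot b \in F$ holds exactly when $a\in F$ and $b\in F$; hence $h(a\cdot b)=h(a)\cap h(b)$. Upward closure gives $h(a)\cup h(b)\subseteq h(a+b)$. For the reverse inclusion we use primeness: $a+b\in F$ implies $a\in F$ or $b\in F$. Thus $h(a+b)=h(a)\cup h(b)$.

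\textbf{Injectivity.} Suppose $a\not\leq b$. We want a prime filter containing $a$ but not $b$. Consider the family of filters $G$ with $\ua{a}\subseteq G$ and $b\notin G$. It is nonempty, since $\ua{a}$ belongs to it. It is closed under unions of chains, so Zorn's Lemma gives a maximal member $F$, which is proper because $b\notin F$.

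We show $F$ is prime. Suppose $x+y\in F$ but $x,y\notin F$. The filters generated by $F\cup\set{x}$ and by $F\cup\set{y}$ strictly extend $F$, so by maximality both contain $b$. Hence there are $f_1,f_2\in F$ with $f_1\cdot x\leq b$ and $f_2\cdot y\leq b$. Put $f=f_1\cdot f_2\in F$. Distributivity gives
\[
f\cdot(x+y)=f\cdot x+f\cdot y\leq b.
\]
Since $f\cdot(x+y)\in F$, this forces $b\in F$, a contradiction. So $F$ is prime, and $F\in h(a)\setminus h(b)$. Therefore $h$ is order-reflecting and in particular injective. This Zorn argument is the main step of part~(1); the rest is routine.

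\textbf{Part (2).} The sets $h(a)$ cover $\Prim(L)$ because $h(\one)=\Prim(L)$. They are closed under finite intersections because $h(a)\cap h(b)=h(a\cdot b)$. So they form a basis of a topology $\tau$.

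For $T_0$: let $F\neq G$ be prime filters, say $a\in F\setminus G$. Then $h(a)$ is an open set containing $F$ but not $G$.

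It remains to show that each $h(a)$ is compact; taking $a=\one$ then gives compactness of the whole space. It suffices to consider covers by basic sets. Let $h(a)\subseteq\bigcup_{i\in I}h(a_i)$, and let $J$ be the ideal generated by $\set{a_i:i\in I}$. Suppose $a\notin J$. Run the Zorn argument above with the ideal $J$ in place of the single element $b$: take a filter $F$ maximal among filters containing $a$ and disjoint from $J$. The same distributivity computation shows $F$ is prime. Then $F\in h(a)$ but $F\notin h(a_i)$ for every $i$, contradicting the cover. Hence $a\in J$, which means $a\leq a_{i_1}+\dots+a_{i_n}$ for finitely many indices. Then
\[
h(a)\subseteq h(a_{i_1})\cup\dots\cup h(a_{i_n}),
\]
so we have a finite subcover. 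This compactness step is the only delicate point of part~(2), and it reduces to the same prime filter separation lemma used for injectivity.
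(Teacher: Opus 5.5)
Your proof is correct and complete. The paper gives no proof of this statement; it is quoted from Stone, so there is no argument of the paper's to compare with. What you give is the standard proof.

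Its only non-routine ingredient is the prime filter theorem: a filter disjoint from an ideal extends to a prime filter that is still disjoint from that ideal. This is the same tool the paper invokes elsewhere as ``the prime ideal theorem,'' for instance in part (3) of Lemma~\ref{lem:FI}.

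One small economy is available. Prove that separation lemma once, for an arbitrary ideal $J$, as you do in the compactness step. Injectivity is then the special case $J=\da{b}$, and you do not need to run the Zorn argument twice.
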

The topology $\tau$ has other significant properties, which we do  not require in the present context. The topological space $\klam{\Prim(L), \tau}$ is called the \emph{Stone space} or \emph{spectral space} of $L$, denoted by $\S(L)$. We note that in this topology $\Cl(\set{H}) = \da{H}$, unlike the case when the spectral space is obtained from prime ideals.

\section{Contact structures}\label{sec:contact}

A \emph{contact relation} on $L$ is a binary relation $\con$ on $L$ which satisfies the following properties (by means of $\notcon$ we denote the set-theoretical complement of $\con$):
\begin{gather}
    (\forall x\in L)\,\zero\notcon  x\,,\tag{C0}\label{C0}\\
    (\forall x\in L)\,(x\neq\zero\timplies x\con x)\,,\tag{C1}\label{C1}\\
    (\forall x,y\in L)\,(x\con y\timplies y\con x)\,,\tag{C2}\label{C2}\\
    (\forall x,y,z\in L)\,(x\con y\tand y\leq z\timplies x\con z)\,,\tag{C3}\label{C3}\\
    (\forall x,y,z\in L)\,(x\con (y+z) \timplies x\con y\tor x\con z)\,.\tag{C4}\label{C4}
\end{gather}
Below, $\con$ will be the generic name for a contact relation on $L$. A bounded distributive lattice $L$ endowed with a contact relation will be called a \emph{DLC}. \eqref{C3} is a weak connection between the lattice ordering and the contact relation. If $\con$ is extensional, then $\leq$ can be defined by $\con$.
The smallest contact relation on $L$ is the\emph{ overlap relation} defined in the following way:
\begin{equation}\tag{$\defeqtt{\overl}$}\label{df:overl}
x\overl y\iffdef x\cdot y\neq\zero\,.
\end{equation}
The largest contact relation on $L$ is $\con_{\max}\defeq \set{\klam{a,b} \in L^2: a \neq \zero \tand b \neq \zero}$. The collection of contact relations on $L$ is denoted by \ConRel.

Since the axioms \eqref{C0}--\eqref{C4} are universal, the restriction of $\con$ to a bounded sublattice of $L$ is also a contact relation. Conversely, a contact relation on a substructure can be extended, and an extension result was proved in \cite{Duntsch-Winter-CBCA} for Boolean contact algebras and dense subalgebras. It also holds for bounded distributive lattices:

\begin{theorem}\label{thm:extendC}
Let $L,M$ be bounded distributive lattices, $L$ a bounded sublattice of $M$, and $\con$ be a contact relation on $L$. Then, there is a contact relation $\conbar$ on $M$ such that $\klam{L,\con}$ is a substructure of $\klam{M, \conbar}$. Furthermore, $\conbar$ is the largest extension of $\con$ to $M$ for which $L^{2}\cap\mathord{\conbar}=\mathord{\con}$.
\end{theorem}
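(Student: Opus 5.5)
We define the extension explicitly, by analogy with the Boolean case of \cite{Duntsch-Winter-CBCA}. For $x,y \in M$ set
\begin{gather*}
x \conbar y \iffdef (\forall a,b \in L)\,(x \leq a \tand y \leq b \timplies a \con b).
\end{gather*}
In words, $x$ and $y$ are in contact exactly when every pair of elements of $L$ lying above them is in contact in $L$. The plan has three steps: verify \eqref{C0}--\eqref{C4} for $\conbar$, show that it restricts to $\con$ on $L$, and show that it is the largest such extension.

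\emph{Axioms and restriction.} For \eqref{C0}, take $a = \zero$ with $\zero \leq \zero$; since $\zero \notcon b$ in $L$, we get $\zero \notconbar y$. Symmetry \eqref{C2} is immediate from \eqref{C2} for $\con$. For \eqref{C3}, suppose $x \conbar y$ and $y \leq z$. Any $b \in L$ with $z \leq b$ also satisfies $y \leq b$, so the condition passes to $z$.

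For the restriction, let $x,y \in L$. If $x \con y$ and $x \leq a$, $y \leq b$, then \eqref{C3} and \eqref{C2} in $L$ give $a \con b$, so $x \conbar y$. Conversely, if $x \conbar y$, take $a=x$ and $b=y$ to obtain $x \con y$. Hence $L^2 \cap \mathord{\conbar} = \mathord{\con}$.

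\emph{The main obstacle: \eqref{C1} and \eqref{C4}.} The difficulty is that $x \in M$ may have no nice approximations from above in $L$.

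For \eqref{C1}, let $x \neq \zero$ and $x \leq a, b$ with $a,b \in L$. Then $a \cdot b \in L$ and $x \leq a \cdot b$, so $a \cdot b \neq \zero$. By \eqref{C1} in $L$ we have $(a\cdot b) \con (a \cdot b)$, and \eqref{C3} with \eqref{C2} then give $a \con b$. So $x \conbar x$.

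For \eqref{C4}, argue contrapositively. Suppose $x \notconbar y$ and $x \notconbar z$. Then there are witnesses $a_1,b_1 \in L$ with $x \leq a_1$, $y \leq b_1$, $a_1 \notcon b_1$, and witnesses $a_2, c \in L$ with $x \leq a_2$, $z \leq c$, $a_2 \notcon c$. Put $a \defeq a_1 \cdot a_2$. By \eqref{C3} and \eqref{C2}, $a \notcon b_1$ and $a \notcon c$. Then \eqref{C4} in $L$ gives $a \notcon (b_1 + c)$. Since $x \leq a$ and $y + z \leq b_1 + c$, this shows $x \notconbar (y+z)$. The reverse direction, that $x \conbar y$ implies $x \conbar (y+z)$, is already a case of \eqref{C3}.

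\emph{Maximality.} Let $\con'$ be any contact relation on $M$ with $L^2 \cap \mathord{\con'} = \mathord{\con}$, and suppose $x \con' y$. If $x \leq a$ and $y \leq b$ with $a,b \in L$, then \eqref{C3} and \eqref{C2} for $\con'$ give $a \con' b$. Since $a,b \in L$, this means $a \con b$. Hence $x \conbar y$, so $\mathord{\con'} \subseteq \mathord{\conbar}$ and $\conbar$ is the largest such extension.

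Distributivity of $M$ is not actually needed in this argument. The only operations used are meets of elements of $L$ and joins in $M$, and $L$ is a bounded sublattice of $M$, so these are compatible.
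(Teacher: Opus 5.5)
Your proposal is correct and is essentially the paper's proof. It uses the same definition $x \conbar y$ iff $(\ua{x}\cap L)\times(\ua{y}\cap L)\subseteq\con$, the same contrapositive argument for \eqref{C4} via $a_1\cdot a_2$ and $b_1+c$, and the same maximality argument via \eqref{C3}; you additionally spell out \eqref{C0}--\eqref{C3} and the restriction property, which the paper calls clear, and these checks are sound (in \eqref{C0} the second-coordinate witness is $b=\one$).
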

\begin{proof}
For $a,b \in M$ set
\begin{equation}\tag{$\defeqtt{\conbar}$}\label{df:conbar}
a \conbar b \iffdef (\ua{a} \cap L) \times (\ua{b} \cap L) \subseteq\con.
\end{equation}
Clearly, $\conbar \cap (L \times L) = \con$, and $\conbar$ satisfies \eqref{C0}--\eqref{C3}.

For \eqref{C4}, suppose that $a,b,c \in M$ are such that $a \notconbar b$ and $a \notconbar c$. Then, there are $s_0,t_0,s_1,t_1\in L$ such that $a \leq s_0, b \leq t_0$ and $s_0\notcon t_0$, and $a \leq s_1, c \leq t_1$ and $s_1\notcon t_1$. It follows that $a \leq s_0 \cdot s_1$ and $b+c \leq t_0 + t_1$. But, by \eqref{C2} and \eqref{C3} for $\con$, we obtain that $s_0 \cdot s_1\notcon t_0$ and $s_0 \cdot s_1\notcon t_1$, therefore by \eqref{C4}, $s_0 \cdot s_1\notcon t_0+t_1$. In consequence,
\[
(\ua{a} \cap L) \times (\ua{(b+c)} \cap L) \nsubseteq\con
\]
which means $a\notconbar b+c$, as required.

Suppose that $\con'$ is a contact relation on $M$ which extends $\con$. Let $a,b \in M$ and $a \mathrel{\con'} b$. If $s,t \in L$ and $a \leq s, b \leq t$, then, $s \mathrel{\con'} t$ by \eqref{C3}. Since $\con'$ extends $\con$ we have $s \con t$, and therefore, $a \conbar b$.
\end{proof}
If $L$ is a p-algebra, then a contact relation on $\bL$ can be extended to $L$, even though $\bL$ need not be a subalgebra of $L$:
\begin{theorem}
Let $L$ be a p-algebra and $\con$ be a contact relation on $\bL$. Then, the relation $\conbar$ defined by
\begin{gather*}
a \conbar b \iffdef a^{**} \con b^{**}
\end{gather*}
is a contact relation on $L$.
\end{theorem}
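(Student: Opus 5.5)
The plan is to verify the axioms \eqref{C0}--\eqref{C4} for $\conbar$ one at a time. Each one will be transferred from the corresponding property of $\con$ on the Boolean algebra $\bL$ by means of the map $r_L\colon a \mapsto a^{**}$. By Lemma~\ref{lem:propL}(1), $r_L$ is a surjective homomorphism onto $\klam{\bL,\lor,\land,-,\zero,\one}$. In particular it is monotone, it satisfies $r_L(\zero)=\zero$, and it satisfies $r_L(a+b) = (a+b)^{**} = a^{**} \lor b^{**}$, where the join is taken in $\bL$. The point to keep in mind is that $\bL$ need not be a sublattice of $L$: its join is $\lor$, not $+$. So the contact axioms for $\con$ on $\bL$ must be read with $\lor$ and with the order of $\bL$. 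That order coincides with the restriction of $\leq$, because $a \land b = a\cdot b$.

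The first three axioms are immediate.
\begin{itemize}
\item \eqref{C0}: we have $\zero^{**}=\zero$, and $\zero \notcon y$ in $\bL$.
\item \eqref{C1}: if $a \neq \zero$ then $a^{**} \neq \zero$. This holds because $a \leq a^{**}$; alternatively, Lemma~\ref{lem:propL}(3) gives $a\cdot a = \zero$ iff $a^{**}\cdot a^{**} = \zero$. Hence $a^{**} \con a^{**}$ by \eqref{C1} for $\con$.
\item \eqref{C2}: symmetry passes through the definition directly.
\end{itemize}

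For \eqref{C3}, suppose $a \conbar b$ and $b \leq c$. Then $b^{**} \leq c^{**}$, since ${}^{*}$ is antitone and ${}^{**}$ is monotone. Applying \eqref{C3} in $\bL$ gives $a^{**} \con c^{**}$.

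For \eqref{C4}, suppose $a \conbar (b+c)$, that is, $a^{**} \con (b+c)^{**}$. Lemma~\ref{lem:propL}(1) gives $(b+c)^{**} = b^{**} \lor c^{**}$. Now \eqref{C4} for $\con$, read with the Boolean join $\lor$ of $\bL$, yields $a^{**} \con b^{**}$ or $a^{**} \con c^{**}$. That is exactly $a \conbar b$ or $a \conbar c$.

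The only step requiring care, which I expect to be the main (if mild) obstacle, is this one: it relies on the homomorphism property $(b+c)^{**} = (b^{**}+c^{**})^{**}$ from Lemma~\ref{lem:propL}(1). It must not be confused with $b^{**}+c^{**}$, which in general is not in $\bL$.
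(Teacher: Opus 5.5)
Your proposal is correct and follows essentially the same route as the paper. The paper singles out \eqref{C4} as the only nontrivial axiom and handles it exactly as you do, via $(b+c)^{**} = (b^{**}+c^{**})^{**} = b^{**}\lor c^{**}$: the paper cites Balbes--Dwinger for this identity, while you take it from the homomorphism property in Lemma~\ref{lem:propL}(1). Your explicit checks of \eqref{C0}--\eqref{C3} fill in what the paper leaves as trivial.
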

\begin{proof}
    The only not completely trivial part is to show that $\conbar$ satisfies  \eqref{C4}. To this end, suppose that $a\conbar (b+c)$, i.e., $a^{**}\mathrel{\con} (b+c)^{**}$.
    Since
\begin{gather*}
 b^{**} \lor c^{**} = (b^{**} + c^{**})^{**} = (b+c)^{**},
\end{gather*}
the latter equation by e.g. \cite[Theorem VIII.2.1(ix)]{bd74}, we have that
\[
a^{**}\mathrel{\con} (b^{**}\lor c^{**})
\]
and so $a^{**}\con b^{**}$ or $a^{**}\con c^{**}$, by the assumption that $\con$ is a contact relation on $\bL$. In consequence, $a\conbar b$ or $a\conbar c$, as required.
\end{proof}
It was shown in \cite{Duntsch-Winter-CBCA} that there may be more than one contact relation on $M$ whose restriction to $L$ is $\con$.

\section{Standard topological representation}\label{sec:toprep}

If $\klam{X,\tau}$ is a topological space, we denote by $\RC(X)$ its Boolean algebra of regular closed sets. In this algebra, the join of two elements is their union and the meet is not intersection, but the closure of the interior of their intersection. The \emph{standard topological contact} is defined on $2^X$  by
\[
u \mathrel{\con_\tau} z \qtiff u \cap z \neq \z.
\]
It turned out that embedding a Boolean algebra with a contact relation into its spectral space of ultrafilters with standard contact was not in general possible, and other ways had to be found. One such way was to change the points from ultrafilters (which serve as points only in the case of the overlap---the minimal contact relation on any algebra) to a different kind. The solution was to use clans, a generalization of ultrafilters,  whose origins lay in proximity structures \cite{Thron-PSAG}. If $\klam{L,\con}$ is a  DLC, then a \emph{clan} $\Gamma$ is a union of prime filters of $L$ with the property that $\Gamma \times \Gamma \subseteq \con$; in particular, each prime filter is a clan.

We denote the set of clans of $\klam{L,\con}$ by $\Clan(L)$. Strictly speaking we should write $\Clan_{\con}(L)$; since there will be no opportunity for confusion, we shall use the simplified notation.  The close connection of $\con$ and $\Clan(L)$ is demonstrated by the following observation:

\begin{lemma}[{\citealp[Lemma 12]{Duntsch-et-al-DCLTR}}]\label{lem:CG}
For all $a,b \in L$, $a \con b$ \tiff there are prime filters $F$ and $G$ such that $\klam{a,b}\in F\times G\subseteq\con$  \tiff there exists a~clan $\Gamma$ such that $a,b \in \Gamma$.
\end{lemma}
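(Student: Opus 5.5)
I would prove the three conditions equivalent in a cycle: (1) $a\con b$, (2) there are prime filters $F,G$ with $\klam{a,b}\in F\times G\subseteq\con$, and (3) there is a clan $\Gamma$ with $a,b\in\Gamma$.

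\textbf{(2) implies (3).} Set $\Gamma\defeq F\cup G$, which is a union of prime filters. To see $\Gamma\times\Gamma\subseteq\con$, take $x,y\in\Gamma$ and split into cases:
\begin{itemize}
\item If $x\in F$ and $y\in G$, then $x\con y$ by hypothesis.
\item If $x\in G$ and $y\in F$, then $x\con y$ by \eqref{C2}.
\item If $x,y\in F$, then $x\cdot y\in F$. Choose any $g\in G$ (for instance $g=b$). Then $x\cdot y\con g$, so $x\cdot y\neq\zero$ by \eqref{C0}. Hence $x\cdot y\con x\cdot y$ by \eqref{C1}, and two applications of \eqref{C3} together with \eqref{C2} give $x\con y$.
\item The case $x,y\in G$ is symmetric to the previous one.
\end{itemize}
So $\Gamma$ is a clan containing $a$ and $b$.

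\textbf{(3) implies (1).} This is immediate: $a,b\in\Gamma$ gives $\klam{a,b}\in\Gamma\times\Gamma\subseteq\con$.

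\textbf{(1) implies (2).} This is the main step, and I would do it with two prime-filter separation arguments.

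\emph{Finding $F$.} Consider the set
\[
I_a\defeq\set{x\in L: x\notcon b}.
\]
By \eqref{C0}, \eqref{C3} and \eqref{C4} (using \eqref{C2} to move to the left argument), $I_a$ is an ideal of $L$. The assumption $a\con b$ means $a\notin I_a$, so $\ua{a}\cap I_a=\z$. The prime filter theorem for distributive lattices then gives a prime filter $F$ with $a\in F$ and $F\cap I_a=\z$. Thus $x\con b$ for every $x\in F$.

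\emph{Finding $G$.} This requires more care, because a single $G$ must work against all $x\in F$ simultaneously. Consider the set
\[
J\defeq\set{y\in L: x\notcon y \text{ for some } x\in F}.
\]
It is downward closed by \eqref{C3}. It is closed under joins: if $x_1\notcon y_1$ and $x_2\notcon y_2$ with $x_1,x_2\in F$, then $x_1\cdot x_2\in F$. By \eqref{C3} and \eqref{C2}, $x_1\cdot x_2\notcon y_1$ and $x_1\cdot x_2\notcon y_2$, so \eqref{C4} gives $x_1\cdot x_2\notcon y_1+y_2$. It contains $\zero$ by \eqref{C0}. Hence $J$ is an ideal. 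Moreover $b\notin J$ by the choice of $F$.

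Separating $\ua{b}$ from $J$ yields a prime filter $G$ with $b\in G$ and $G\cap J=\z$. This means $x\con y$ for all $x\in F$ and $y\in G$, i.e.\ $F\times G\subseteq\con$.

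The delicate point is this two-stage choice: the ideal $J$ has to be built from the already chosen $F$, using that $F$ is closed under meets, exactly as in the extension argument of Theorem~\ref{thm:extendC}.
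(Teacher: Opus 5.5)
Your proof is correct and follows essentially the paper's route. The paper only cites this lemma, but it derives prime filters $F,G$ with $\klam{a,b}\in F\times G\subseteq\con$ from $a\con b$ (the ``$\supseteq$'' step of Theorem~\ref{thm:dr}) by the same two successive prime-filter extensions, packaged as Lemma~\ref{lem:FI}(3): your $J$ is the ideal $I_F$ and your $I_a$ is $I_{\ua{b}}$. The clan equivalences are the routine verifications you give; in the case $x,y\in F$, $x\cdot y\neq\zero$ also follows directly from $F$ being proper.
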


It turned out, that the choice of clans as the set of points is appropriate for contact relations on Boolean algebras:
\begin{theorem}[{\citealp[Theorem 5.1]{Dimov-et-al-CARBTSPA1}}]\label{thm:standard}
Let $\klam{B,\con}$ be a Boolean algebra with a contact relation $\con$, $h\colon B \to 2^{\Clan(B)}$ be the embedding $h(a) \defeq \set{\Gamma \in \Clan(B): a \in \Gamma}$, and $\klam{X,\tau}$ the topological space with $X \defeq \Clan(B)$, and $\tau$ generated by the basis $\set{h(a): a \in B}$ for closed sets. Then, $\klam{X,\tau}$ is a compact semiregular $T_0$ space, $h$~is a dense embedding into the complete Boolean algebra $\RC(X)$ of regular closed sets of $X$, and $a \con b$ \tiff $h(a) \con_\tau h(b)$.
\end{theorem}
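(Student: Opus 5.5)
We prove the statement in four steps: the topology, the homomorphism and density of $h$, and finally the contact. Throughout we use that in a Boolean algebra every prime filter is an ultrafilter, and by Lemma~\ref{lem:CG} every clan is a union of ultrafilters $\Gamma$ with $\Gamma\times\Gamma\subseteq\con$. Clans are also upward closed, and $a+b\in\Gamma$ implies $a\in\Gamma$ or $b\in\Gamma$, since an ultrafilter containing $a+b$ contains $a$ or $b$.

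\textbf{Basic closed sets.} We first check that $\{h(a):a\in B\}$ is a closed basis. It contains $h(\one)=X$ and $h(\zero)=\z$ (by \eqref{C0}, $\zero$ lies in no clan). It is closed under finite unions, because $h(a+b)=h(a)\cup h(b)$ by the primeness property above. Next, $h$ separates points: if $a\not\le b$, then $a\cdot -b\neq\zero$, so some ultrafilter $U$ contains $a$ but not $b$. This $U$ is a clan, and it lies in $h(a)\setminus h(b)$. The same argument shows that distinct clans are separated by some $h(a)$, which gives $T_0$.

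\textbf{Compactness.} Suppose $\{h(a_i)\}_{i\in I}$ has the finite intersection property. We want a clan containing all $a_i$. Finite intersection alone does not give $\prod a_i\neq\zero$, since clans need not be filters. So we pass to the ultrafilter level instead. For each finite $J\subseteq I$, the set $\{\Gamma:\{a_i\}_{i\in J}\subseteq\Gamma\}$ is nonempty. Choosing ultrafilters $U_i\ni a_i$ with $\bigcup_i U_i$ pairwise in contact is a compactness argument in the product of the Stone spaces of $B$. The set of families $(U_i)_{i\in I}$ with $a_i\in U_i$ and $U_i\times U_j\subseteq\con$ for all $i,j$ is closed there: if $U_i\times U_j\not\subseteq\con$, witnessed by $x\in U_i$, $y\in U_j$ with $x\notcon y$, then the open set given by $x\in U_i$, $y\in U_j$ avoids it. Finite intersection supplies the finite subfamilies, so Tychonoff yields a clan. \textbf{Semiregularity} follows once the sets $X\setminus h(a)$ are shown to be regular open, which is handled below.

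\textbf{Regular closedness and density.} We show $h(-a)=\Cl(X\setminus h(a))$, which makes $h(a)$ regular closed. The inclusion $\supseteq$ holds because every clan omitting $a$ contains an ultrafilter, and that ultrafilter then contains $-a$. Moreover $h(-a)$ is closed. For $\subseteq$: let $\Gamma\ni -a$ and let $h(c)$ be a basic closed set with $X\setminus h(a)\subseteq h(c)$. Every ultrafilter containing $-a$ lies in $h(c)$, so $-a\le c$ and hence $\Gamma\in h(c)$. Taking interiors and closures then shows that $h$ preserves the Boolean operations of $\RC(X)$. Density follows: every nonempty regular closed set contains some nonempty basic $X\setminus h(a)$ up to closure, hence contains $h(-a)$ with $-a\neq\zero$.

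\textbf{Contact.} We have $h(a)\cap h(b)\neq\z$ iff some clan contains both $a$ and $b$, and by Lemma~\ref{lem:CG} this holds iff $a\con b$.

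\textbf{Main obstacle.} The delicate step is compactness, and specifically the closedness argument in the product of Stone spaces needed for the Tychonoff step.
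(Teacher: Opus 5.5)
Your proof is correct. The paper gives no proof of this statement; it is quoted from \cite[Theorem 5.1]{Dimov-et-al-CARBTSPA1}. The only in-paper overlap is Theorem~\ref{thm:repDCL}, whose proof is exactly your first and last steps: $h(a+b)=h(a)\cup h(b)$ from primeness of clans, and contact from Lemma~\ref{lem:CG}.

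What is genuinely Boolean in your argument is the identity $h(-a)=\Cl(X\setminus h(a))$, which relies on an ultrafilter omitting $a$ containing $-a$. Applied to $a$ and to $-a$, it gives $\Intr h(a)=X\setminus h(-a)$ and hence $\Cl(\Intr h(a))=h(a)$. The same identity also yields semiregularity, preservation of complements in $\RC(X)$, and density. This is precisely the point that, as Section~\ref{sec:toprep} explains, breaks down for non-Boolean lattices.

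Your Tychonoff argument in $\Prim(B)^I$ is sound. The sets defined by finitely many constraints are closed and downward directed. They are nonempty because a clan through finitely many $a_i$ contains ultrafilters through each of them.

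A route that stays inside $B$ is Zorn's lemma. Take a maximal $\Sigma\supseteq\set{a_i: i\in I}$ all of whose finite subsets lie in clans. It is upward closed and pairwise in contact. It is also prime: if $a+b\in\Sigma$ but $a,b\notin\Sigma$, pick finite $S_1,S_2\subseteq\Sigma$ with $S_1\cup\set{a}$ and $S_2\cup\set{b}$ in no clan, and consider a clan containing $S_1\cup S_2\cup\set{a+b}$. So $B\setminus\Sigma$ is an ideal, and the prime ideal theorem makes $\Sigma$ a union of ultrafilters, i.e.\ a clan. Your version instead exhibits the clan condition as a closed condition on prime filters, in the spirit of Section~\ref{sec:discrep}.

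Two tacit points should be made explicit.
\begin{enumerate}
\item Clans must be nonempty. You use this for $h(\one)=X$ and for ``every clan omitting $a$ contains an ultrafilter''. Under the paper's literal wording, the empty union of prime filters would be a clan. It would lie in every nonempty open set, and then no nonempty $h(a)$ would be regular closed.
\item The reduction of compactness to families of basic closed sets is valid because the basis is closed under finite unions, so every closed set is an intersection of sets $h(c)$.
\end{enumerate}
Also, that clans are unions of ultrafilters is the definition, not a consequence of Lemma~\ref{lem:CG}.
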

Similar to the embedding of Theorem \ref{thm:repDL}, the mapping $h$ sends each $a \in B$ to the set of points  containing $a$, however, unlike in Theorem \ref{thm:repDL}, the sets $h(a)$ are closed. Salient points of this representation are
\begin{align}
&\text{$X$ is the set of clans of $B$.} \label{s0}\\
&\text{$h$ embeds $B$ into the collection of regular closed sets as a basis for $\tau$.} \label{s2}\\
&\text{The contact relation on $2^X$ is the standard topological contact.} \label{s3}
\end{align}

In this section we shall investigate if and how these properties can be kept when considering lattice based  structures more general than Boolean algebras. Our first observation shows that keeping \eqref{s2} excludes large classes beyond that of Boolean algebras. To this end we define an \emph{underlap} relation $\underl$, dual to the overlap relation $\overl$ introduced in \eqref{df:overl}  by
\begin{gather}\tag{$\defeqtt{\underl}$}\label{def:ul}
a \mathrel{\underl} b \iffdef a + b \neq \one.
\end{gather}
In the case $a\underl b$ we say that $a$ \emph{underlaps}~$b$.
Note that $\underl$ depends only on the reduct $\klam{L,+,\one}$ and does not require a contact relation on $L$.

\begin{theorem}[{\citealp[Corollary 1]{Duntsch-et-al-DCLTR}}]
Let $\klam{X,\tau}$ be a topological space and $h$ be an embedding of the reduct $\klam{L,+,\zero,\one}$ into the lattice of closed sets such that $M \defeq \set{h(a): a \in L}$ is a closed basis for $\tau$.
The following are equivalent:
\begin{enumerate}
\item $\underl$ is extensional.
\item $M \subseteq \RC(X)$.
\item $(\forall a,b)[h(a \cdot b) = \Cl(\Intr(h(a) \cap h(b)))]$.
\end{enumerate}
\end{theorem}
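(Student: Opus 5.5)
The plan is to work throughout with the basic open sets. Since $h$ preserves $+$, $\zero$ and $\one$, the set $M$ is closed under finite unions. Hence the sets $X \setminus h(c)$, $c \in L$, are closed under finite intersections: $(X\setminus h(c)) \cap (X \setminus h(c')) = X \setminus h(c+c')$. They therefore form an open basis of $\tau$.

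The key translation I will use repeatedly is the following. Since $h$ is injective and preserves $+$ and $\one$, we have $X\setminus h(c) \subseteq h(a)$ iff $h(a+c) = X$ iff $a + c = \one$. Similarly, $X\setminus h(c)\neq\z$ iff $c \neq \one$. Note also that $a\le b$ implies $\underl(b)\subseteq\underl(a)$. So for $\underl$ the condition \eqref{C5} will be used in the form ``$\underl(a)=\underl(b)$ implies $a=b$''; my arguments will only need this, or the inclusion read in the antitone direction.

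(1)$\Rightarrow$(2). Suppose $x \in h(a) \setminus \Cl(\Intr(h(a)))$. Choose a basic neighbourhood $X\setminus h(b)$ of $x$ disjoint from $\Intr(h(a))$; in particular $x\notin h(b)$. Whenever $a + c = \one$, the set $X \setminus h(c)$ lies in $\Intr(h(a))$, so it is disjoint from $X\setminus h(b)$, which means $b + c = \one$. Applying this with $c$ replaced by $b+c$ shows that $(a+b)+c=\one$ implies $b+c = \one$. Trivially, $b+c=\one$ implies $(a+b)+c=\one$. Hence $\underl(a+b) = \underl(b)$, and extensionality gives $a+b=b$, i.e. $a \le b$. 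Then $x\in h(a)\subseteq h(b)$, which is a contradiction. So $h(a) = \Cl(\Intr(h(a)))$.

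(2)$\Rightarrow$(1). Assume $\underl(a)\subseteq\underl(b)$; I aim to show $h(b) \subseteq h(a)$, hence $b \le a$. Since $h(b)$ is regular closed and $h(a)$ is closed, it suffices to show $\Intr(h(b)) \subseteq h(a)$. Suppose $y \in \Intr(h(b))\setminus h(a)$. Pick a basic open set with $y\in X\setminus h(c) \subseteq h(b)$, and put $d \defeq a + c$. Then $y \in X \setminus h(d) \subseteq h(b)$, which gives $d \neq \one$ and $b + d = \one$. Also $a + d = d \neq \one$. So $d \in \underl(a)\setminus\underl(b)$, a contradiction. The same argument with the roles swapped handles equality, so $\underl$ is extensional in the sense above.

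(2)$\Leftrightarrow$(3). For (3)$\Rightarrow$(2), take $a=b$: then $h(a) = h(a\cdot a) = \Cl(\Intr(h(a)))$. For (2)$\Rightarrow$(3), first note that $h(a\cdot b) \subseteq h(a)\cap h(b)$ by monotonicity. Since $h(a\cdot b)$ is regular closed, $h(a\cdot b) = \Cl(\Intr(h(a\cdot b))) \subseteq \Cl(\Intr(h(a)\cap h(b)))$. Conversely, let $y\in\Intr(h(a)\cap h(b))$ and take $y \in X\setminus h(c) \subseteq h(a)\cap h(b)$. Then $a+c = \one = b + c$, so distributivity gives $a\cdot b + c = \one$, i.e. $X\setminus h(c)\subseteq h(a\cdot b)$. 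Thus $\Intr(h(a)\cap h(b)) \subseteq h(a\cdot b)$, and taking closures (using that $h(a\cdot b)$ is closed) gives the reverse inclusion.

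The main obstacle is (1)$\Rightarrow$(2): the point is to find the right pair to which extensionality applies. My plan is to use $a+b$ versus $b$ rather than $a$ versus $b$, with the substitution $c\mapsto b+c$. I will also need to keep track of the antitone orientation of $\underl$ relative to \eqref{C5}.
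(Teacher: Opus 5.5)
Your proof is correct and complete. The paper gives no argument for this statement; it only quotes it from \cite{Duntsch-et-al-DCLTR}. So there is no in-paper proof to compare with, and your route via the translation $X\setminus h(c)\subseteq h(a) \iff a+c=\one$ stands as a self-contained proof.

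Two points are worth recording.

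First, your handling of \eqref{C5} is necessary, not pedantic. Read literally ($\underl(x)\subseteq\underl(y)$ implies $x\le y$), extensionality of $\underl$ fails in every non-trivial $L$: $\underl(\one)=\z\subseteq\underl(\zero)$, but $\one\not\le\zero$. Your (1)$\Rightarrow$(2) uses only the weak form ($\underl(a)=\underl(b)$ implies $a=b$, applied to $a+b$ and $b$). Your (2)$\Rightarrow$(1) delivers the strong antitone form ($\underl(a)\subseteq\underl(b)$ implies $b\le a$). You therefore actually show that both sensible readings are equivalent to (2).

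Second, the equivalence (1)$\Leftrightarrow$(2) uses only $+$, $\zero$, $\one$ and injectivity of $h$. Distributivity enters only in (2)$\Rightarrow$(3), through $a\cdot b+c=(a+c)\cdot(b+c)$.

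One wording fix: the sets $X\setminus h(c)$ form an open base for $\tau$ because every closed set is an intersection of members of $M$. Closure under finite intersections only makes them a base for \emph{some} topology.
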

This shows that the embedding even of a reduct of a distributive lattice into the lattice of regular closed sets---with or without a contact relation---is rather limited. Indeed, $\underl$-extensionality is not satisfied in $p$-algebras which are not Boolean, in particular, in Heyting algebras \cite[Theorem 1]{Duntsch-et-al-DCLTR}. It follows that we must give up semiregularity or other properties of the standard representation, when seeking to represent such algebras. To put it differently, a topological representation of any class of reducts of $p$-algebras cannot take place in the BA of regular closed subsets of a space. This is not desirable for a number of reasons. The motivation to talk about structures richer than Boolean algebras (in the sense of adding contact) stemmed from the idea of building algebraic models of space in which regions are taken as primitives.
We want regions to model (at least remotely) entities occupying the real space where physical phenomena take place. It was, quite naturally, assumed that regular closed subsets of the Cartesian three-dimensional space may serve this purpose well: they have no ``punctures'' or ``hairs'', and they have the well-known structure of a complete Boolean algebra \cite{Stell-BCAANATRCC,Pratt-Hartmann-EARIRBTOS}. Moreover, they allow for speaking about the nearness of regions in terms of the topological closure operation. Pure Boolean algebras are very rough in this respect, as the only nearness we can speak about in their case is the standard overlap operation, a very crude form of it (which is not surprising when we think about elements of BAs as clopen sets, regions ``deprived'' of boundaries). Therefore, the idea arose to expand the signature of BAs with a binary contact relation, whose aim was to grasp the phenomenon of nearness beyond the overlap relation. Hence, as well, the desire followed to have them represented as subalgebras of regular closed subsets of topological spaces. This desire  found its fulfillment in the representation theorems from \cite{Duntsch-et-al-RTBCA,Dimov-et-al-CARBTSPA1}. Going beyond $\RC(X)$ of some space $X$ may be considered as a form of philosophical betrayal, which we would very much want to avoid. Nevertheless, we venture at least to glimpse in such an unfavourable direction.

Let us first relax condition \eqref{s2}, so that $h(a)$ is a closed set that is not necessarily regular. In this case we have a positive result for a reduct of $L$:
\begin{theorem}\label{thm:repDCL}
Suppose $\klam{L,\con}$ is a DLC, and that $h\colon L \to 2^{\Clan(L)}$ is defined by the assignment $h(a) \defeq \set{\Gamma \in \Clan(L): a \in \Gamma}$. Then, $h$ is a $\klam{L,+,\zero, \one}$ embedding and $a \con b \Iff h(a) \cap h(b) \neq \z$.
\end{theorem}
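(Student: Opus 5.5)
The plan is to check the required properties of $h$ directly from the definition of a clan, and to obtain the contact equivalence from Lemma~\ref{lem:CG}.

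\emph{Bounds.} Since a clan is a union of prime filters, and prime filters are proper, no clan contains $\zero$; hence $h(\zero)=\z$. Every clan is nonempty, so it contains at least one prime filter, and therefore contains $\one$; hence $h(\one)=\Clan(L)$. Here I take clans to be nonempty, since $\z$ is not a sensible point; if needed this can be fixed by convention.

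\emph{Joins.} Take a clan $\Gamma=\bigcup_i F_i$ with each $F_i$ a prime filter. Then $a+b\in\Gamma$ iff $a+b\in F_i$ for some $i$. By primeness this holds iff $a\in F_i$ or $b\in F_i$ for some $i$, which is iff $a\in\Gamma$ or $b\in\Gamma$. So $h(a+b)=h(a)\cup h(b)$.

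\emph{Injectivity.} Suppose $a\neq b$, say $a\not\leq b$. By the prime filter theorem (Theorem~\ref{thm:repDL}) there is a prime filter $F$ with $a\in F$ and $b\notin F$. Every prime filter is a clan: by \eqref{C1} and \eqref{C3} we have $x\con y$ for $x,y\in F$, since $x\cdot y\neq\zero$ lies in $F$ and $x\cdot y\con x\cdot y$. Hence $F\in h(a)\setminus h(b)$, and $h$ is injective. Note that $h$ need not preserve meets, which is why the statement only claims a $\klam{L,+,\zero,\one}$ embedding.

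\emph{Contact.} By Lemma~\ref{lem:CG}, $a\con b$ iff there is a clan $\Gamma$ with $a,b\in\Gamma$. This says exactly that $h(a)\cap h(b)\neq\z$.

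None of these steps is a real obstacle once Lemma~\ref{lem:CG} is available. The only points needing care are that a prime filter is indeed a clan, and that the join computation uses primeness inside a single component filter.
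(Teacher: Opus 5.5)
Your proof is correct and follows the same route as the paper. You preserve the bounds, obtain $h(a+b)=h(a)\cup h(b)$ from primeness of the filters making up a clan, get injectivity from a separating prime filter (which is a clan), and read the contact equivalence directly off Lemma~\ref{lem:CG}; your explicit checks that prime filters are clans and that clans must be taken nonempty for $h(\one)=\Clan(L)$ fill in details the paper treats as obvious.
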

\begin{proof}
Clearly, $h$ preserves $\zero$ and $\one$. If $a,b \in L$, then
\begin{xalignat*}{2}
h(a+b) &= \set{\Gamma \in \Clan(L): a + b \in \Gamma} \\
&= \set{\Gamma \in \Clan(L): a \in \Gamma \tor b \in \Gamma}, && \text{since $\Gamma$ is a clan} \\
& = h(a) \cup h(b).
\end{xalignat*}
This shows that $h$ preserves joins, and so, $\set{h(a): a \in L}$ can be used as a closed base for $\tau$. Furthermore, $h$ is injective, since there is a prime ideal $F$ containing exactly one of $a$ or $b \in F$ by the Prime Ideal Theorem. It remains to show that $h$ preserves contact. We have
\begin{gather}\label{hcon}
a \con b \Iff (\exists \Gamma \in \Clan(L))[a,b \in \Gamma] \Iff h(a) \cap h(b) \neq \z,
\end{gather}
the first equivalence by Lemma \ref{lem:CG}.
\end{proof}
Since $h$ preserves joins, the collection $\set{h(a): a \in L}$ may be regarded as a basis for a topology $\tau$ on $\Clan(L)$ with standard topological contact by \eqref{hcon}. However, the connection to the topological properties is secondary, and it seems not quite justified to speak of a topological representation.

Now, meet in the lattice of closed sets is intersection, and we require $h(a \cdot b) = h(a) \cap h(b)$. However, this may fail as the next examples show.
\begin{example}\label{ex:notmeet}
Suppose that $\zero$ is not meet irreducible, and let $\con$ be the largest contact relation on $L$, that is, $\con\defeq L^+ \times L^+$. Then, $L^+$ is the largest clan which contains all non-zero elements and it is an element of all $h(a), a \in L^+$. However, by the assumption, there are non-zero elements $a$ and $b$ such that $a\cdot b=\zero$, yet $h(a)\cap h(b)\neq\zero$.\QED
\end{example}

\pagebreak

Even when $\zero$ is meet irreducible, meet need not be preserved:
\begin{example}
Consider the following lattice:
\begin{gather*}
\xymatrix{
& \one  &  & \\
a \ar@{->}[ru] & & b \ar@{->}[lu] \\
 & c\ar@{->}[ru] \ar@{->}[lu] \\
 & \zero \ar@{->}[u]
}
\end{gather*}
Again, let $\mathord{\con}\defeq L^+\times L^+$. Then, $\Clan(L) = \set{\ua{c}, \ua{a}, \ua{b}, \ua{a} \cup \ua{b}}$, and
\begin{gather*}
h(a \cdot b) = h(c) = \set{\ua{c}} \subsetneq \set{\ua{c}, \ua{a} \cup \ua{b}} = h(a) \cap h(b).
\end{gather*}\QED
\end{example}
 On the other hand, there are cases when meet is preserved:
\begin{example}
Suppose that $L$ is a linear order. Then, the only contact relation is $\overl$ and the clans are the end segments of $L$.  Let $h\colon L \to 2^{\Clan(L)}$ be the Stone map. By Theorem \ref{thm:repDCL} all that is left to show is that $h$ preserves meet. If $a,b \in L^+$, and \wlg $a \leq b$, then $h(a \cdot b) = h(a) = h(a) \cap h(b)$.\QED
\end{example}
Therefore, the best general result we can expect in spirit and form of Theorem \ref{thm:standard} by relaxing \eqref{s2} is a representation for bounded join reducts of bounded distributive lattices in the lattice of closed sets as in Theorem \ref{thm:repDCL}. The question remains whether disregarding regular closed sets is a meaningful action when we have spatial regions with boundaries in mind.

\section{Discrete representation}\label{sec:discrep}

An alternative to the standard topological representation of lattice based contact structures is a representation by relations on the set of prime filters. \citet{Duntsch-et-al-RBTODSAPA} developed the theory of discrete proximity Boolean algebras\footnote{For an explanation of the name ``discrete'' for these structures see \cite[Section 1]{Duntsch-et-al-RBTODSAPA}.} following, in part, Galton's theory of adjacency relations \cite{Galton-QSC}, and they related properties of proximities to properties of binary relations on the set of ultrafilters of a Boolean algebra $B$. Their work was extended in \cite{dw_cl}, where it is shown that there is a bijective order preserving correspondence between the contact relations on $B$ and the reflexive and symmetric relations on $\Prim(B)$ which are closed in the product topology of $\Prim(B) \times \Prim(B)$ of the spectral space of $B$ as in Theorem \ref{thm:repDL}. In this section we shall show that this result can be extended to DLCs. As a preparation we generalize \cite[Lemma 6]{Duntsch-et-al-RBTODSAPA}. For a filter $F$ of a DLC $L$ let $I_F \defeq \set{a \in L: (\exists b \in F)\,a \notcon  b}$.
\begin{lemma}\label{lem:FI}
Let $F,G$ be filters of $L$.
\begin{enumerate}
\item $I_F$ is an ideal of $L$.
\item $F \times G \subseteq \con$ \tiff $I_G \cap F = \z$ \tiff $I_F \cap G = \z$.
\item If $F \times G \subseteq \con$, there are prime filters $F',G'$ such that $F \subseteq F', G \subseteq G'$ and $F' \times G' \subseteq \con$.
\end{enumerate}
\end{lemma}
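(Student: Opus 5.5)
For (1), I will check the ideal properties directly from the axioms. First, $\zero \in I_F$: since $F$ is a filter, $\one \in F$, and $\zero \notcon \one$ by \eqref{C0}. Next, $I_F$ is a down-set. If $a \notcon b$ with $b\in F$ and $a' \leq a$, then $a' \con b$ would give $a\con b$ by \eqref{C2} and \eqref{C3}, so $a' \notcon b$. For closure under joins, suppose $a_0 \notcon b_0$ and $a_1 \notcon b_1$ with $b_0,b_1 \in F$, and put $b \defeq b_0\cdot b_1 \in F$. By \eqref{C3} (and symmetry) we get $a_0 \notcon b$ and $a_1\notcon b$. Then \eqref{C4}, applied to $b \con (a_0+a_1)$, together with \eqref{C2}, yields $a_0+a_1 \notcon b$. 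Hence $a_0+a_1\in I_F$.

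For (2), I will unwind the definitions. The condition $I_G\cap F \neq \z$ means there are $a\in F$ and $b\in G$ with $a\notcon b$, which is exactly $F\times G\not\subseteq \con$. By \eqref{C2}, $F\times G\subseteq\con$ is equivalent to $G\times F\subseteq\con$, and the same unwinding turns this into $I_F\cap G=\z$.

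For (3), I will use the Prime Ideal Theorem twice. By (2), $I_G \cap F=\z$, and $I_G$ is an ideal by (1), so there is a prime filter $F'\supseteq F$ disjoint from $I_G$. By (2) again, $F'\times G\subseteq\con$, that is, $I_{F'}\cap G=\z$. Since $I_{F'}$ is an ideal by (1), we can extend $G$ to a prime filter $G'$ disjoint from $I_{F'}$. A final application of (2) gives $F'\times G'\subseteq\con$.

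The only point needing care is the order of operations in (3): $I_{F'}$ must be formed after $F'$ has been chosen, which is why the extensions are done one at a time rather than simultaneously. Apart from that, the only real work is the join-closure step in (1), which is where \eqref{C4} is used.
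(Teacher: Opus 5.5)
Your proof is correct and follows the paper's argument essentially step for step. You verify the ideal properties directly from (C2)--(C4), unwind the definitions for (2), and apply the prime ideal theorem twice in succession for (3). The differences are minor: the paper extends $G$ first and then $F$ (immaterial by symmetry), and you additionally check $\zero\in I_F$, which the paper leaves implicit.
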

\begin{proof}
1. If $a \in I_L$, and $b \leq a$, then $b \in I_L$ by \eqref{C3}. If $a,b \in I_L$ with $c,d \in F$ are such that $a\notcon c$ and $b(-\con d)$, then $c \cdot d \in F$, and $c \cdot d \notcon a $ and $c \cdot d \notcon b $. \eqref{C4} now implies $c \cdot d\notcon (a+b)$.

2. \aright Assume that $a \in I_F \cap G$, and let $b \in F$ such that $a \notcon  b$. Since $F \times G \subseteq \con$, we have $a \con b$, a contradiction.

\aleft Let $a \in F, b\in G$, and $a (-\con b)$. Then, $a \in F \cap I_G \neq \z$. The remaining implications follow from the symmetry of $\con$.

3. Let $F \times G \subseteq \con$; then $I_F \cap G = \z$ by 1. above. By the prime ideal theorem, there is some prime filter $G'$ with $G \subseteq G'$ and $G' \cap I_F = \z$. By 2. above, this shows that $F \times G' \subseteq \con$. By symmetry of $\con$  and 2. there is some prime filter $F'$ containing $F$ and $F' \times G' \subseteq \con$.
\end{proof}
Let $\klam{X,\tau}$ be the Stone space of $L$, $h: L \into 2^X$ be the embedding of Theorem \ref{thm:repDL}, and set $\Base\defeq \set{h(a): a \in L}$; then, $\Base$ is an open basis for $\tau$, and each element of $\Base$ is compact. Furthermore, $\Base \times \Base$ is an open basis for the product topology on $X \times X$ \cite[Proposition 2.3.1]{eng77}. Let $\rsc$ be the set of reflexive and symmetric binary relations on $X$ that are closed in the product topology $X \times X$ ordered by set inclusion. We define mappings $p\colon \ConRel \to \rsc$ and $q\colon \rsc \to \ConRel$ as follows:
\begin{align}\label{def:p}
\con &\overset{p}{\longmapsto} \set{\klam{F,G}: F \times G \subseteq \con}, \\
R &\overset{q}{\longmapsto} \bigcup\set{F \times G: \klam{F,G} \in R}. \label{def:q}
\end{align}

\begin{lemma}\label{lem:for-dr}
    Let $\klam{X,\tau}$ be the Stone space of $L$. If $R$ is a closed relation in $X\times X$, then for any prime filters $F$ and $G$ of $L$
    \[
F\times G\subseteq q(R)\qtiff\klam{F,G}\in R.
    \]
\end{lemma}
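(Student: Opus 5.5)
The direction from right to left is immediate from the definition \eqref{def:q}: if $\klam{F,G}\in R$, then $F\times G$ is one of the sets in the union defining $q(R)$, so $F\times G\subseteq q(R)$. This direction needs no assumption on $R$.

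For the direction from left to right I will argue by contraposition and use only that $R$ is closed. Suppose $F,G$ are prime filters with $\klam{F,G}\notin R$. Then $\klam{F,G}$ lies in the open set $(X\times X)\setminus R$. Since $\Base\times\Base$ is an open basis for the product topology, there are $a,b\in L$ with
\[
\klam{F,G}\in h(a)\times h(b)\subseteq (X\times X)\setminus R .
\]
By the definition of $h$ this gives $a\in F$ and $b\in G$, so $\klam{a,b}\in F\times G$.

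It remains to check that $\klam{a,b}\notin q(R)$. If it were, then by \eqref{def:q} there would be $\klam{F',G'}\in R$ with $a\in F'$ and $b\in G'$. That means $\klam{F',G'}\in h(a)\times h(b)$, which contradicts the disjointness of $h(a)\times h(b)$ from $R$. Hence $F\times G\not\subseteq q(R)$, completing the contraposition.

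There is no real obstacle here. The only point needing care is to translate membership in the basic rectangle $h(a)\times h(b)$ into the element-wise conditions $a\in F'$ and $b\in G'$. Neither compactness of the sets $h(a)$ nor reflexivity or symmetry of $R$ is needed; closedness of $R$ alone carries the argument.
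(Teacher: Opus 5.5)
Your proof is correct and is essentially the paper's argument. The paper shows directly that every basic neighbourhood $h(a)\times h(b)$ of $\klam{F,G}$ meets $R$, hence $\klam{F,G}\in\Cl(R)=R$; you run the same reasoning in contrapositive form, using a basic rectangle around $\klam{F,G}$ that is disjoint from $R$.
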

\begin{proof}
The non-trivial direction is left-to-right. Assume that $F\times G\subseteq q(R)$ and let $h(a)\times h(b)$ be an arbitrary basic open set around $\klam{F,G}$. Then, $\klam{a,b}$ is an element of $q(R)$, so there are prime filters $F'$ and $G'$ such that $\klam{F',G'}\in R$ and $\klam{a,b}\in F'\times G'$, i.e., $\klam{F',G'}\in h(a)\times h(b)$. In consequence, $[h(a)\times h(b)]\cap R\neq\emptyset$, and since $h(a)\times h(b)$ is arbitrary and $R$ is closed, $\klam{F,G}\in R$, as required.
\end{proof}

\begin{theorem}\label{thm:dr}
\begin{enumerate}
\item The mappings $p$ and $q$ are well defined, i.e. $p(\con) \in \rsc$ and $q(R) \in \ConRel$.
\item $p$ and $q$ are bijective and inverses of each other.\qed
\end{enumerate}
\end{theorem}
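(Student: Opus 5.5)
The plan is to verify well-definedness of each map separately, and then check the two compositions $q\circ p$ and $p\circ q$, using Lemma~\ref{lem:CG} for the first and Lemma~\ref{lem:for-dr} for the second.

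\emph{$p(\con)\in\rsc$.} Take a prime filter $F$. Every $a\in F$ is non-zero, so $a\con a$ by \eqref{C1}. For $a,b\in F$ we have $a\cdot b\in F$, hence $a\cdot b\neq\zero$ and $a\cdot b\con a\cdot b$, and \eqref{C3} together with \eqref{C2} gives $a\con b$. Thus $F\times F\subseteq\con$, so $p(\con)$ is reflexive. Symmetry is immediate from \eqref{C2}. For closedness, suppose $\klam{F,G}\notin p(\con)$. Then there are $a\in F$, $b\in G$ with $a\notcon b$. The basic open set $h(a)\times h(b)$ contains $\klam{F,G}$. Any $\klam{F',G'}$ in it satisfies $\klam{a,b}\in F'\times G'$, so $F'\times G'\not\subseteq\con$. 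Hence $h(a)\times h(b)$ is a neighbourhood of $\klam{F,G}$ disjoint from $p(\con)$, and the complement of $p(\con)$ is open.

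\emph{$q(R)\in\ConRel$.} Axiom \eqref{C0} holds because prime filters are proper, so $\zero$ lies in none of them. For \eqref{C1}, if $x\neq\zero$ the prime ideal theorem gives a prime filter $F\ni x$, and reflexivity gives $\klam{F,F}\in R$, so $x\mathrel{q(R)}x$. Axiom \eqref{C2} follows from symmetry of $R$, and \eqref{C3} from the fact that filters are up-closed. For \eqref{C4}, if $x\in F$ and $y+z\in G$ with $\klam{F,G}\in R$, then primeness of $G$ puts $y$ or $z$ in $G$, which gives the required contact. Closedness of $R$ is not needed here.

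\emph{$q(p(\con))=\con$.} Unwinding the definitions, $x\mathrel{q(p(\con))}y$ says exactly that there are prime filters $F,G$ with $\klam{x,y}\in F\times G\subseteq\con$. By Lemma~\ref{lem:CG} this is equivalent to $x\con y$.

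\emph{$p(q(R))=R$.} By definition, $\klam{F,G}\in p(q(R))$ iff $F\times G\subseteq q(R)$. By Lemma~\ref{lem:for-dr}, which requires $R$ to be closed, this holds iff $\klam{F,G}\in R$.

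These two identities show that $p$ and $q$ are mutually inverse bijections. Both maps are evidently monotone, so the correspondence is order preserving.

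The only step requiring real care is closedness of $p(\con)$. It is handled by choosing the basic neighbourhood $h(a)\times h(b)$ from a single witnessing non-contact pair $a\notcon b$, which works for every $\klam{F',G'}$ in that neighbourhood at once. The remaining steps reduce to Lemmas~\ref{lem:CG} and~\ref{lem:for-dr}.
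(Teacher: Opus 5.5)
Your proof is correct and follows the paper's argument in substance. The closedness check via a single witnessing pair $a\notcon b$ and the verification of \eqref{C0}--\eqref{C4} for $q(R)$ match the paper, and Lemma~\ref{lem:for-dr} is the key step for $p\circ q=\mathrm{id}$ in both. The differences are minor: you cite Lemma~\ref{lem:CG} where the paper derives $\con\subseteq q(p(\con))$ from Lemma~\ref{lem:FI}(3), and you compute $p(q(R))=R$ directly rather than proving injectivity of $q$ by contradiction, which is slightly cleaner.
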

\begin{proof}
1. Let $\con \in \ConRel$. Then, $p(\con)$ is reflexive and symmetric by \eqref{C1} and \eqref{C2}. Suppose that $\klam{F,G} \in \Cl(p(\con))$, and assume that $\klam{F,G} \not\in p(\con)$.  By definition of $p$ we have $F \times G \not\subseteq \con$, and so there are $a \in F, b \in G$ such that $a \notcon b$. Since $h(a) \times h(b)$ is an open neighbourhood of $\klam{F,G}$ and $\klam{F,G} \in \Cl(p(\con))$, there are $F',G' \in \PrimF(L)$ such that $a \in F', b \in G'$ and $\klam{F',G'} \in p(\con)$. It follows from the definition of $p$ that $a \con b$, contradicting our assumption.

Conversely, let $R \in \rsc$. We show that $q(R)$ is a contact relation on $L$.

\eqref{C0}: $\zero$ is not in contact to any $x \in L$, since $\zero \not\in F$ for all $F \in \PrimF(L)$.

\eqref{C1}:  If $a \in L^+$, there is some $F \in \PrimF(L)$ such that $a \in F$. The reflexivity of $R$ implies $\klam{F,F} \in R$, and $a \in F$ and $F \times F \subseteq q(R)$ imply $a \mathrel{q(R)} a$.

\eqref{C2}: This follows from the symmetry of $R$.

\eqref{C3}: Let  $a \mathrel{q(R)} b$, and $b \leq c$. If $a \in F, b \in G$, then $c \in G$ since $G$ is a filter.

\eqref{C4}: Let $a \mathrel{q(R)} (b+c)$, $a \in F, b+c \in G$ and $F \mathrel{R} G$. Since $G$ is prime, we have $b \in G$ or $c \in G$, and therefore, $a \mathrel{q(R)} b$ or $ \mathrel{q(R)} c$.

Note that the closedness of $R$ was not required in this part of the proof.

\smallskip

2. First, we will show that $q(p(\con)) = \con$ for $\con \in \ConRel$. This implies that $p$ is injective and $q$ is surjective.

\sbe Let $a \mathrel{q(p(\con))} b$. Then, there are $F,G \in \PrimF(L)$ such that $\klam{F,G} \in p(\con)$ and $a \in F, b \in G$. Since $\klam{F,G} \in p(\con)$ \tiff $F \times G \subseteq \con$, we obtain that $a \mathrel{\con} b$.

\spe Let $a \con b$, and consider the principal filters $\ua{a}$ and $\ua{b}$; then, $\ua{a} \times \ua{b} \subseteq \con$ by the properties of $\con$. By Lemma \ref{lem:FI}(3) there are prime filters $F,G$ such that $a \in F, b \in G$ and $F \times G \subseteq \con$. Then, $\klam{F,G} \in p(\con)$, and $a \mathrel{q(p(\con))} b$ by the definition of $q$.

It remains to show that $q$ is injective, since then $q$ is bijective and so is $p$ as the inverse of $q$. Let $R,S \in \rsc$ be such that $q(R) = q(S)$, and assume that $R \neq S$, \wlg that $\klam{F,G} \in R \setminus S$. Since $R$ is closed, there are $a,b \in L$ such that $\klam{F,G} \in h(a) \times h(b)$ and $[h(a) \times h(b)] \cap S = \z$. Since $\klam{F,G} \in R$ we have $F \times G \subseteq q(R)$, and $q(R) = q(S)$ implies that $F \times G \subseteq q(S)$. By Lemma \ref{lem:for-dr}, $\klam{F,G}\in S$. Now, $\klam{F,G} \in h(a) \times h(b)$ contradicts $[h(a) \times h(b)] \cap S = \z$.
\end{proof}

The representation theorem for bounded distributive lattices entails representations for other classes of algebras. For example, if $\klam{L,+,\cdot,-,\zero,\one}$ is a Boolean algebra, then its reduct $L_r$ to the signature $\{+,\cdot,\zero,\one\}$ is a bounded distributive lattice. The notion of prime filter, and the embedding $h\colon L_r\to 2^{\Prim(L_r)}$ are the same, thus the points and the basis for the Stone space of $L_r$ are the same as those for the Stone space of $L$. In consequence, we have identical spaces and the same closed symmetric, reflexive, and closed relations on their product space. In consequence, the correspondence between $\rsc$ of $L_r$ and the lattice of contacts of $L_r$ is analogous to the correspondence between $\rsc$ of $L$ and the lattice of contacts of $L$. Similar observations let us deduce the same correspondence for all algebras that are expansions (in the sense of the signatures) of bounded distributive lattices.

While Theorem \ref{thm:dr} shows that every contact relation is of the form $q(R)$ for some symmetric and reflexive relation on $\Prim(L)$, the closure of $R$ is essential, and not the properties of the lattice. There are relations on $\Prim(L)$ which are not of the form $p(\con)$ for any contact relation on $L$, even when $L$ is a Boolean algebra as the following example from  \cite{Duntsch-et-al-RBTODSAPA} shows.

\begin{example}
Suppose that $B$ is the finite-cofinite algebra of $\omega$, and define $R \defeq \set{\klam{\ua{a},\ua{b}}: a,b \in\Atom(B)}$, where $\Atom(B)$ is the set of atoms of $B$. Then, $R$ is not the universal relation, since the cofinite $F$ filter is not $R$-related to anything, in particular $\klam{F,F}\notin R$. But for any $x,y\in F$, $[h(x)\times h(y)]\cap R\neq\emptyset$. Indeed, for any pair of atoms $a\leq x$ and $b\leq y$ we have $\klam{\upop a,\upop b}\in R$. Thus $\klam{F,F}\in\Cl R$ and in consequence, $R$ is not closed. But it is reflexive and symmetric. Assume that there is some $\con\in \ConRel$ such that $R = p(\con)$. Let $a,b \in B^+$, and choose $n \in a, m \in b$. Then, $\ua{\{n\}}, \ua{\{m\}}$ are principal ultrafilters and $R = p(\con)$ implies $\ua{\{n\}} \times \ua{\{m\}} \subseteq \con$. Now, \eqref{C3} implies that $a \con b$, and therefore, $\con$ is the universal relation on $B^+$. This implies $p(\con) = \Prim(B) \times \Prim(B) \neq R$.
\QED\end{example}

For both mappings, $p$ and $q$, we have that they are isotone:
\begin{align*}
    R_1\subseteq R_2&{}\rarrow q(R_1)\subseteq q(R_2),\\
    \con_1\subseteq\con_2&{}\rarrow p(\con_1)\subseteq p(\con_2)\,.
\end{align*}

    Let $\rs$ be the family of all reflexive and symmetric relations on $\Prim(L)$. As we have seen in the proof of Theorem~\ref{thm:dr}, there is a map $\bar{q}\colon\rs\to\ConRel$ (an expansion of $q$ to the family of all reflexive and symmetric relations) which sends every reflexive and symmetric relation $R$ to the contact relation $\bar{q}(R)$. This mapping is isotone, too, and it is onto. Since $p\colon\ConRel\to\rsc$ is a bijection, $p\circ\bar{q}\colon\rs\to\rsc$ is a~surjection. We will show that $p\circ\bar{q}$ is the topological closure operator restricted to $\rs$. Let $c\defeq p\circ\bar{q}$.

    Firstly, let us show that $R\subseteq c(R)$. Indeed, if $\klam{F,G}\in R$, then $F\times G\subseteq\bar{q}(R)$, and so $\klam{F,G}\in c(R)$. Secondly, let us suppose that $Q\in\rsc$ is such that $R\subseteq Q$. Then $\bar{q}(R)\subseteq q(Q)$, and so $p(\bar{q}(R))\subseteq p(q(Q))$, i.e., $c(R)\subseteq Q$, since $Q$ is closed.

    What is interesting here is that the closure of $R\in\rs$ is obtained via taking the contact relation on the algebra. In consequence, for every relation $R$ on the product space, we may find the unique contact relation $\con$ on the algebra corresponding to $R$: take $R^{\star}$ to be the reflexive and symmetric closure of $R$, and then take $\bar{q}(R^{\star})$. Further, if we look at the family of all relations $\frR$ on the product space and its quotient $\frR/_{\sim}$ w.r.t.
    \[
        R\sim Q\iffdef \bar{q}(R^{\star})=\bar{q}(Q^{\star})
    \]
    there exists a bijective correspondence $\alpha$ between $\frR/_{\sim}$ and $\ConRel$ given by $\alpha(R/_{\sim})\defeq \bar{q}(R^{\star})$.

\subsection{Discrete representation of distributive join-semilattices with \texorpdfstring{$\zero$}{0}}

As it turns out, the results we obtained can be extended to structures with reduced signatures. In this section, we will suppose that $\klam{L,+,\zero}$ is a distributive join-semilattice with smallest element $\zero$ and with at least two elements. These were introduced by \citet{gs1962}. Due to the form of the signature, the distributivity property is defined in the following way:
\[
a\leq x+y\rarrow(\exists x_1,y_1\in L)\,(x_1\leq x\tand y_1\leq y\tand a = x_1+y_1).
\]

An \emph{ideal} of a join semi-lattice $L$ is defined in the standard way. $F$ is \emph{filter} of a join semi-lattice $L$ if it satisfies the following two conditions:
\begin{enumerate}[label=(\roman*)]
    \item for any $a,b\in F$ there is an $x\in F$ which is a lower bound of $a$ and $b$,
    \item if $a\in F$ and $a\leq x$, then $x\in F$.
\end{enumerate}
$F$ is a \emph{prime} filter, if for any $a,b\in L$, $a+b\in F$ implies that $a\in F$ or $b\in F$.

An ideal $I$ is \emph{prime} if $L\setminus I$ is a filter.

\begin{lemma}
    For any ideal $I$, $I$ is a prime ideal iff $F\defeq L\setminus I$ is a prime filter.
\end{lemma}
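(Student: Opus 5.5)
The plan is to unwind the definitions directly. By the definition just given, $I$ is a prime ideal exactly when $L\setminus I$ is a filter. So, for an ideal $I$ with $F\defeq L\setminus I$, the lemma reduces to one claim: if $F$ is a filter, then $F$ is automatically prime. For the converse, a prime filter is in particular a filter, so $I$ is prime by definition.

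I will show the primeness of $F$ using only that $I$ is an ideal. Suppose $a+b\in F$ and, towards a contradiction, that $a\notin F$ and $b\notin F$. Then $a,b\in I$, and since an ideal of a join semilattice is closed under finite joins, $a+b\in I$. This contradicts $a+b\in F=L\setminus I$. Hence $a\in F$ or $b\in F$, and $F$ is prime.

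For the reverse direction, if $F=L\setminus I$ is a prime filter, it is in particular a filter. By the definition of a prime ideal, $I$ is then prime.

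There is no genuine obstacle here; the argument is purely definitional. The one point to check is that ``ideal'' in the join-semilattice setting means a down-set closed under $+$ (the standard notion the paper invokes), since the primeness argument uses exactly this closure. Properness and nonemptiness of $F$ play no role in the equivalence as formulated. Distributivity of $L$ is not needed for this lemma; it will matter only later, in the prime filter separation theorem that replaces the use of the Prime Ideal Theorem in Lemma~\ref{lem:FI}(3).
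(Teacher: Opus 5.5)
Your proof is correct and is essentially the paper's argument. For ($\Rightarrow$), the paper likewise notes that $F$ is a filter by the definition of a prime ideal, and it gets primeness from the fact that $a+b\notin I$ forces $a\notin I$ or $b\notin I$, which is closure of $I$ under joins; ($\Leftarrow$) is immediate from the definition, exactly as you say.
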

\begin{proof}
    ($\Rightarrow$) $F$ is a filter by the assumption and by the definition of a prime ideal. If $a+b\in F$, then $a+b\notin I$, so one of $a$ and $b$ is not in the $I$. Thus, $a\in F$ or $b\in F$, and $F$ is prime.

    \smallskip

    ($\Leftarrow$) By the definition of a prime ideal.
\end{proof}
Let $\PrimF(L)$ and $\PrimI(L)$ be, respectively, the sets of all prime filters and prime ideals of $L$. The Stone space of $L$ may be constructed in two equivalent ways: either by defining $h\colon L \to 2^{\PrimF(L)}$ by $x\mapsto\{F\in\PrimF(L):x\in F\}$, or---as it is done in \cite{gra78}---by $g\colon L \to 2^{\PrimI(L)}$ by $x \mapsto \set{P \in \PrimI(L): x \not\in P}$. In both cases, we impose a topology on the set of points by taking as a basis of open sets either $\{h(x):x\in L\}$ or $\{g(x):x\in L\}$. To be consistent with the previous sections, we choose the method via prime filters. Let $\S(L)\defeq\klam{\Prim(L),\tau}$ be the Stone space of $L$.

Suppose that $\con$ is a contact relation on $L^+$. For a filter $F$ of $L$ let
\[
I_F \defeq \set{a \in L: (\exists b \in F)\,a \notcon  b}.
\]
With this, the counterpart of Lemma~\ref{lem:FI} holds for distributive join-semilattices. Further, for the family $\rsc$ of closed, reflexive, and symmetric relations in $\S(L)\times\S(L)$ we define the mapping $q\colon\rsc\to 2^{L\times L}$, and for the lattice of contact relations $\ConRel$ on $L$, the mapping $p\colon\ConRel\to2^{\S(L)\times\S(L)}$. For these we can show the counterpart of Theorem~\ref{thm:dr}. In consequence, we obtain the following
\begin{theorem}\label{th:max-gen-of-Th-1-DW}
    If $L$ is a distributive join-semilattice with zero, then the family of contact relations is in one-to-one correspondence with the family of closed, reflexive, and symmetric relations on the binary product of the Stone space of $L$.
\end{theorem}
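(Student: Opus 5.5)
The plan is to repeat the proof of Theorem~\ref{thm:dr} almost verbatim. We keep the definitions \eqref{def:p} and \eqref{def:q} of $p$ and $q$, now using prime filters of the join-semilattice $L$, and the Stone basis $\Base=\set{h(a):a\in L}$. First we note that $h(a+b)=h(a)\cup h(b)$, because prime filters are up-sets and are prime. Hence $\Base$ is a basis of $\tau$, and $\Base\times\Base$ is a basis of the product topology.

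Here a contact relation means \eqref{C0}--\eqref{C4} with \eqref{C3} read via the order $a\leq b\iff a+b=b$. We establish the semilattice counterpart of Lemma~\ref{lem:FI} in four steps.
\begin{enumerate}
\item $I_F$ is a down-set by \eqref{C3}.
\item $I_F$ is closed under $+$. Given $a\notcon c$ and $b\notcon d$ with $c,d\in F$, condition (i) in the definition of a filter supplies some $e\in F$ with $e\leq c,d$. Then $e\notcon a$ and $e\notcon b$ by \eqref{C2} and \eqref{C3}, and \eqref{C4} gives $e\notcon a+b$.
\item $F\times G\subseteq\con$ iff $I_F\cap G=\z$. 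This equivalence is purely set-theoretic and carries over unchanged.
\item Prime extension. We need a separation theorem: if $G$ is a filter and $I$ an ideal with $G\cap I=\z$, then there is a prime filter $G'\supseteq G$ with $G'\cap I=\z$. This is the prime ideal theorem for distributive join-semilattices of \citet{gs1962} (cf.\ \cite{gra78}). We take $G'=L\setminus P$, where $P$ is a prime ideal with $I\subseteq P$ and $P\cap G=\z$, and use the preceding lemma that complements of prime ideals are prime filters. As in Lemma~\ref{lem:FI}(3), we apply this twice, using symmetry.
\end{enumerate}
We also need that $\ua{a}$ is a filter for $a\neq\zero$, and that every nonzero $a$ lies in some prime filter. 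The latter follows by separating $\ua a$ from the ideal $\set{\zero}$.

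With these tools the proof of Theorem~\ref{thm:dr} transfers step by step. Closedness of $p(\con)$ uses only basic rectangles $h(a)\times h(b)$. Checking that $q(R)$ satisfies \eqref{C0}--\eqref{C4} uses the following facts:
\begin{itemize}
\item prime filters omit $\zero$, which holds provided we require them to be proper;
\item prime filters are up-sets;
\item prime filters are prime;
\item each nonzero element lies in some prime filter, which gives \eqref{C1}.
\end{itemize}
The identity $q(p(\con))=\con$ follows from the prime extension step applied to $\ua a\times\ua b\subseteq\con$. Here $\ua a\times\ua b\subseteq\con$ holds by \eqref{C2} and \eqref{C3}. Lemma~\ref{lem:for-dr} and the injectivity of $q$ use only closedness of $R$ and the basic open rectangles, so they carry over verbatim. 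Together these give the claimed bijection, and it is moreover isotone.

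The main obstacle is the separation theorem for distributive join-semilattices. Without meets, filters are only down-directed, and the standard Zorn argument must use the distributivity condition $a\leq x+y\Rightarrow a=x_1+y_1$ in place of lattice distributivity. I would first try to cite the Grätzer--Schmidt result. Failing that, I would take an ideal $P$ maximal among ideals containing $I$ and disjoint from $G$. Suppose $L\setminus P$ were not down-directed, say $x,y\notin P$ have no common lower bound outside $P$. Then the ideals generated by $P\cup\set{x}$ and by $P\cup\set{y}$ each meet $G$, giving elements $g_1\leq p_1+x$ and $g_2\leq p_2+y$ of $G$ with $p_1,p_2\in P$. Taking $g\in G$ below both, we have $g\leq p_1+x$ and $g\leq p_2+y$. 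Distributivity then splits $g$ into pieces, and we aim to show that $g$ lies in $P$, a contradiction. This splitting is the delicate calculation.
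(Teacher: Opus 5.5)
Your proposal is correct and follows the paper's route: the paper only asserts that Lemma~\ref{lem:FI} and Theorem~\ref{thm:dr} carry over, and your details are what that transfer needs, namely a common lower bound in $F$ in place of $c\cdot d$, the separation theorem for distributive join-semilattices, and properness of prime filters. One small slip: $h(a+b)=h(a)\cup h(b)$ is not what makes $\Base$ an open basis; that comes from down-directedness of prime filters (if $a,b\in F$, pick $c\in F$ with $c\leq a,b$, so $F\in h(c)\subseteq h(a)\cap h(b)$), and the same fact, with $c\neq\zero$, is what gives reflexivity of $p(\con)$.
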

The above theorem is the direct strengthening of \cite[Theorem 1]{dw_cl}, and has probably the most general form it can take, taking into account the fact that the standard axioms for the contact relation are formulated in the signature of join-semilattices with the bottom element.

\label{page:Ivanova}
Related results can be found in \cite{Duntsch-et-al-PRODBPL-arxiv} and \cite{Ivanova-CJS}. In the former paper, the authors develop a~representation (and duality) for bounded distributive lattices with the contact (and also the pre-contact) relation in the manner of Priestley. The results of the latter concern bounded distributive join-semilattices, and the contact relations considered, besides the standard axioms \eqref{C0}--\eqref{C4}, satisfy also infinitely many axiom schemas expressing additional properties of contact. In addition, the author proves the representation of these structures in the spirit of \cite{Duntsch-et-al-RTBCA,Dimov-et-al-CARBTSPA1}. Both approaches are different from the discrete approach followed in our work.

\section{The structure of \texorpdfstring{$\ConRel$}{C}}\label{sec:conlat}
Let us investigate the structure of $\ConRel$, respectively $\rsc$, using the notation of the previous section. It is well known \cite{mkt44,mkt46} that the closed sets of any topological space $X$ form a complete co-Heyting algebra $H(X)$  under the operations
\begin{gather}\label{clcH}
\bigvee A \defeq \Cl\left(\bigcup A\right),\ \bigwedge A\defeq \bigcap A,\ a \leftarrow b\defeq \Cl(a \setminus b),\  \zero\defeq \z,\ \one\defeq X.
\end{gather}
Our aim is  to show that $\rsc$ is a complete sublattice of $H(X)$, albeit with a different $\zero$, generalizing a corresponding result in \cite{dw_cl} in which $X$ is the Stone space of a Boolean algebra.

Keeping in mind that $p[\ConRel] = \rsc$, and that the smallest contact relation on $L$ is the overlap relation $\overl$ we shall first look at $p(\overl)$. If $L$ is a Boolean algebra, then $p(\overl)$ is the identity on $\Prim(L)$ which is closed, since the space is Hausdorff \cite[Exercise 2.3.C.(a)]{eng77}, but it will be larger when it is not.

\pagebreak

\begin{lemma}\label{lem:poverl}
$p(\overl) = \set{\klam{F,G}: F \tand G \text{ have an upper bound in } \Prim(L)}$, and $p(\overl) = \min(\rsc)$.
\end{lemma}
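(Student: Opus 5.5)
The plan has two parts: first the explicit description of $p(\overl)$, then minimality. Minimality will follow formally from the fact that $\overl$ is the smallest contact relation.

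For the description, recall that $\klam{F,G}\in p(\overl)$ iff $F\times G\subseteq \overl$, i.e.\ iff $a\cdot b\neq\zero$ for all $a\in F$, $b\in G$.

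Suppose first that $F,G\subseteq H$ for some $H\in\Prim(L)$. For $a\in F$ and $b\in G$ we have $a,b\in H$, so $a\cdot b\in H$. Since $H$ is proper, $a\cdot b\neq\zero$.

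Conversely, suppose $F\times G\subseteq\overl$. I will consider the filter generated by $F\cup G$, namely
\[
K\defeq\set{x\in L: (\exists a\in F)(\exists b\in G)\,a\cdot b\leq x}.
\]
This is a filter because $F$ and $G$ are closed under meets and $K$ is upward closed. It is proper because $\zero\in K$ would force $a\cdot b=\zero$ for some $a\in F$, $b\in G$. By Zorn's Lemma and distributivity (as recalled in Section~\ref{sec:clan}), $K$ extends to a maximal, hence prime, filter $H$. Then $H$ is an upper bound of $F$ and $G$ in $\Prim(L)$. This step, building a proper filter from the overlap condition and extending it to a prime one, is the only real content and is routine.

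For minimality, $p(\overl)\in\rsc$ by Theorem~\ref{thm:dr}(1). Let $R\in\rsc$. By Theorem~\ref{thm:dr}(1), $q(R)$ is a contact relation. Since $\overl$ is the smallest contact relation, $\overl\subseteq q(R)$. Because $p$ is isotone, $p(\overl)\subseteq p(q(R))$, and $p(q(R))=R$ by Theorem~\ref{thm:dr}(2). Hence $p(\overl)\subseteq R$ for every $R\in\rsc$, so $p(\overl)=\min(\rsc)$.

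Alternatively, minimality can be checked directly with Lemma~\ref{lem:for-dr}, but the isotonicity argument above is shorter. The only point to make sure of is that $\overl\subseteq\con$ for every contact relation $\con$. This holds because if $a\cdot b\neq\zero$, then $(a\cdot b)\con(a\cdot b)$ by \eqref{C1}, and applying \eqref{C3} on both sides (using \eqref{C2}) gives $a\con b$.
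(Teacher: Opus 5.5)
Your proof is correct. The description of $p(\overl)$ follows the paper's route: the filter generated by $F\cup G$ is proper, and it then extends to a prime filter. You have simply filled in the details.

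For minimality your route differs. You transport the minimality of $\overl$ in $\ConRel$ through the correspondence of Theorem~\ref{thm:dr}, using isotonicity of $p$ and $p(q(R))=R$. The only inclusion you actually need is $p(q(R))\subseteq R$, which is Lemma~\ref{lem:for-dr}; that is where closedness of $R$ enters.

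The paper argues topologically instead. It notes that the least element of $\rsc$ is $\Cl(1')$ and checks $p(\overl)\subseteq\Cl(1')$ directly. For $\klam{F,G}\in p(\overl)$ and a basic neighbourhood $h(a)\times h(b)$ one has $a\cdot b\neq\zero$. So some prime filter $H$ contains $a$ and $b$, and $\klam{H,H}$ lies in that neighbourhood.

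Your version is shorter and purely order-theoretic. The paper's is more hands-on and directly produces the explicit identification $\min(\rsc)=p(\overl)=\Cl(1')$, which is used later in Theorem~\ref{thm:rscrep}. Your argument recovers this identification in one more step. $\Cl(1')$ is reflexive, closed and symmetric (Lemma~\ref{lem:sym}), so your result gives $p(\overl)\subseteq\Cl(1')$. Conversely, $p(\overl)$ is closed and contains $1'$, so $\Cl(1')\subseteq p(\overl)$.
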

\begin{proof}
By the definition of $p$,
\begin{align*}
\klam{F,G} \in p(\overl) &\Iff F \times G \subseteq \overl, \\
&\Iff a \in F \tand b \in G \Implies a \cdot b \neq \zero, \\
&\Iff \text{the filter $H$ generated  by $F \cup G$ is a proper filter}, \\
&\Iff F \tand G \text{ have an upper bound in } \Prim(L).
\end{align*}
Clearly, the smallest element of $\rsc$ is the closure of the identity relation $1'$, so it suffices to show that $p(\overl) \subseteq \Cl(1')$. Suppose that $\klam{F,G} \in p(\overl)$, and let $h(a) \times h(b)$ be a basic neighbourhood of $\klam{F,G}$. By the hypothesis, $a \cdot b \neq \zero$, so there is a prime filter $H$ containing both $a$ and $b$. Then, $\klam{H,H} \in [h(a) \times h(b)] \cap 1'$.
\end{proof}
Observe that $F$ and $G$ have an upper bound in $\Prim(G)$ \tiff $F \cup G$ has the finite intersection property.

\begin{corollary}\label{cor:max-from-trans}
    If $R\in \rsc$ is transitive, then for any $\klam{F,G}\in R$ there exists a maximal $F'\supseteq F$ such that $\klam{F',G}\in R$.
\end{corollary}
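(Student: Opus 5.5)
We use Lemma \ref{lem:poverl} together with transitivity, and then finish with Zorn's lemma. Fix $\klam{F,G}\in R$. By Zorn's lemma there is a maximal filter $F'\supseteq F$; as recalled in Section~\ref{sec:clan}, it is an ultrafilter and, $L$ being distributive, it is prime. We check that this $F'$ works: it is maximal and contains $F$, so it only remains to show $\klam{F',G}\in R$.

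The pair $F,F'$ has the upper bound $F'$ in $\Prim(L)$. By the description of $p(\overl)$ in Lemma \ref{lem:poverl}, this gives $\klam{F,F'}\in p(\overl)$. The same lemma says $p(\overl)=\min(\rsc)$, and $R\in\rsc$, so $p(\overl)\subseteq R$. Hence $\klam{F,F'}\in R$, and by symmetry of $R$ also $\klam{F',F}\in R$.

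Transitivity of $R$ applied to $\klam{F',F}\in R$ and $\klam{F,G}\in R$ now yields $\klam{F',G}\in R$, as required.

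There is no real obstacle here. The one point needing care is that ``maximal'' should be read as a maximal (ultra)filter, i.e.\ maximal in $\Prim(L)$, rather than as maximal among the prime filters $F''\supseteq F$ with $\klam{F'',G}\in R$. The argument above covers the stronger reading anyway: every $F''\supseteq F$ related to $G$ sits below some ultrafilter that is itself related to $G$. If the weaker reading were intended, one could instead apply Zorn's lemma to chains in $\set{F''\in\Prim(L): F\subseteq F'',\ \klam{F'',G}\in R}$. The union of such a chain is a prime filter, and it is related to $G$ because $R$ is closed: each basic neighbourhood $h(a)\times h(b)$ of $\klam{\bigcup\mathcal C,G}$ already contains some $\klam{F'',G}$ from the chain. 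The transitivity route is shorter, and it is the one we follow.
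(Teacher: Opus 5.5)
Your proof is correct and is the paper's argument: extend $F$ to a maximal (hence prime) filter $F'$, note that $\klam{F',F}\in p(\overl)\subseteq R$ by Lemma~\ref{lem:poverl} because $F'$ is a common upper bound of $F$ and $F'$, and conclude $\klam{F',G}\in R$ by transitivity. Reading ``maximal'' as ``ultrafilter'' matches how the paper later uses the corollary for p-algebras, and your alternative Zorn argument based on the closedness of $R$ is also sound, though it is not needed.
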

\begin{proof}
    Let $\klam{F,G}\in R$. Since for a maximal prime filter $F'$ extending $F$, $F'\cup F$ clearly have a finite intersection property, the pair $\klam{F',F}$ is in $p(\overl)$, and also in $R$, by Lemma~\ref{lem:poverl}. So, the transitivity of $R$ entails that $\klam{F',G}\in R$.
\end{proof}

Our next step is to show that the lattice operations of \eqref{clcH} preserve reflexivity and symmetry. Clearly, reflexivity is preserved by supersets, and symmetry is preserved under arbitrary unions. It remains to show that symmetry is preserved under closure.
\begin{lemma}\label{lem:sym}
Let $R$ be a symmetric binary relation on $\Prim(L)$. Then, $\Cl(R)$ is in $\rsc$.
\end{lemma}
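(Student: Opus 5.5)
The plan is to check the three defining properties of $\rsc$ for $\Cl(R)$ separately. Closedness holds by definition. For reflexivity I will read the lemma in the context of the preceding paragraph, where $R$ already arises from reflexive relations (as a superset or union of members of $\rsc$). Then $1' \subseteq R \subseteq \Cl(R)$, so reflexivity passes to the closure because it is preserved by supersets. The only real content is therefore that symmetry survives taking the closure in $\Prim(L)\times\Prim(L)$.

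For symmetry I will use the swap map $\sigma\colon \Prim(L)\times\Prim(L)\to\Prim(L)\times\Prim(L)$, defined by $\sigma(\klam{F,G}) \defeq \klam{G,F}$. First I show that $\sigma$ is a homeomorphism. It is an involution, so it suffices to show it is continuous. On the open basis $\Base\times\Base$ of the product topology we have $\sigma^{-1}[h(a)\times h(b)] = h(b)\times h(a)$, which is again a basic open set. Hence $\sigma$ is continuous, and since $\sigma^{-1}=\sigma$, it is a homeomorphism.

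A homeomorphism commutes with closure, so $\sigma[\Cl(R)] = \Cl(\sigma[R])$. Symmetry of $R$ means exactly $\sigma[R]=R$. Therefore $\sigma[\Cl(R)] = \Cl(R)$, which says that $\Cl(R)$ is symmetric.

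If one prefers to avoid the general fact about homeomorphisms, the same step can be done directly with basic neighbourhoods. Let $\klam{F,G}\in\Cl(R)$, and let $h(b)\times h(a)$ be a basic neighbourhood of $\klam{G,F}$. Then $h(a)\times h(b)$ is a neighbourhood of $\klam{F,G}$, so it contains some $\klam{F',G'}\in R$. By symmetry of $R$ we get $\klam{G',F'}\in R\cap[h(b)\times h(a)]$, and hence $\klam{G,F}\in\Cl(R)$.

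I do not expect a genuine obstacle. The only point needing care is the reflexivity hypothesis, which is implicit in the statement, and I will flag it explicitly when writing the proof.
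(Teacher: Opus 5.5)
Your proposal is correct and essentially the paper's proof. The paper's argument is exactly your direct one: every basic neighbourhood $h(a)\times h(b)$ of $\klam{F,G}$ meets $R$, so by symmetry every $h(b)\times h(a)$ around $\klam{G,F}$ does; your swap-homeomorphism version just repackages this. Your reflexivity caveat is also right: as literally stated the lemma fails (e.g.\ for $R=\z$), and the paper likewise proves only symmetry, relying on the preceding paragraph where the relations are already reflexive.
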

\begin{proof}
Let $\klam{F,G} \in \Cl(R)$. Then, every basic neighbourhood $h(a) \times h(b)$ intersects $R$, and it follows from the symmetry of $R$, that every basic neighbourhood $h(b) \times h(a)$ of $\klam{G,F}$ also intersects $R$. This implies that $\klam{G,F} \in \Cl(R)$.
\end{proof}

\begin{theorem}\label{thm:rscrep}
$\rsc$ is a complete atomic co-Heyting algebra with supremum, infimum and $\one$ as in \eqref{clcH}, $\zero = \Cl(1')$, and $R \leftarrow S=\Cl(R \setminus S) \cup \Cl(1')$.
\end{theorem}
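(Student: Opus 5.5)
The plan is to identify $\rsc$ with the principal up-set of $\Cl(1')$ inside the symmetric part of the co-Heyting algebra $H(X\times X)$ of closed subsets of $X\times X$, where $X=\Prim(L)$, and then check each clause of the statement in turn.

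\emph{Lattice structure.} First, $\rsc$ is exactly the set of closed symmetric relations containing $\Cl(1')$. A closed reflexive relation contains $1'$, hence $\Cl(1')$. Conversely, a closed relation containing $\Cl(1')$ contains $1'$ and so is reflexive.
\begin{itemize}
\item For any family $A\subseteq\rsc$, the intersection $\bigcap A$ is closed, symmetric and contains $\Cl(1')$. So the infimum is as in \eqref{clcH}; for $A=\z$ it is $X\times X$, which gives $\one$.
\item For $A\neq\z$, the union $\bigcup A$ is symmetric and reflexive. By Lemma \ref{lem:sym}, $\Cl(\bigcup A)$ is in $\rsc$. It is the least closed set above every member of $A$, hence the supremum, exactly as in \eqref{clcH}.
\item The least element is $\Cl(1')$, which equals $p(\overl)$ by Lemma \ref{lem:poverl}.
\end{itemize}
Thus $\rsc$ is a complete sublattice of $H(X\times X)$ apart from the choice of $\zero$.

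\emph{Co-implication.} We need, for $R,S\in\rsc$, the least $T\in\rsc$ with $R\subseteq S\cup T$; note that finite joins of closed sets are plain unions. In $H(X\times X)$ the least closed $T$ with $R\subseteq S\cup T$ is $\Cl(R\setminus S)$. Since $R$ and $S$ are both symmetric, $R\setminus S$ is symmetric, so $\Cl(R\setminus S)$ is symmetric by the argument of Lemma \ref{lem:sym}. It need not be reflexive, so we add $\Cl(1')$. For $T\in\rsc$ the chain of equivalences is
\[
R\subseteq S\cup T \iff R\setminus S\subseteq T \iff \Cl(R\setminus S)\subseteq T \iff \Cl(R\setminus S)\cup\Cl(1')\subseteq T,
\]
where the last step uses $\Cl(1')\subseteq T$. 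This yields $R\leftarrow S=\Cl(R\setminus S)\cup\Cl(1')$. Together with completeness, this makes $\rsc$ a complete co-Heyting algebra.

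\emph{Atomicity.} I would use two order-theoretic facts about the Stone space: $\Cl(\set{H})=\da H$, and consequently closed sets of $X\times X$ are down-sets in the product order, since the closure of a point $\klam{F,G}$ is $\da F\times\da G$. Given $R\in\rsc$ with $R\neq\Cl(1')$, pick $\klam{F,G}\in R\setminus\Cl(1')$ that is minimal in the product order among pairs outside $\Cl(1')$. The candidate atom is
\[
A\defeq\Cl(1')\cup(\da F\times\da G)\cup(\da G\times\da F).
\]
It is closed as a finite union of closed sets, symmetric, and contained in $R$ because $R$ is a closed symmetric down-set. Suppose $\Cl(1')\subsetneq Q\subseteq A$ with $Q\in\rsc$. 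Then $Q$ contains a pair outside $\Cl(1')$ lying in $\da F\times\da G$ (or, after applying symmetry, in that set). By minimality this pair must be $\klam{F,G}$ itself, so $Q\supseteq A$ and hence $Q=A$. Therefore $A$ is an atom below $R$.

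\emph{Main obstacle.} The hard step is the existence of such a minimal pair. I would apply Zorn's lemma to descending chains $\klam{F_i,G_i}$ in $R\setminus\Cl(1')$. The intersections $F_\infty=\bigcap F_i$ and $G_\infty=\bigcap G_i$ are prime filters, and $\klam{F_\infty,G_\infty}$ lies in $R$ because $R$ is closed. What must be shown is that it stays outside $\Cl(1')=p(\overl)$, i.e.\ that some $a\in F_\infty$ and $b\in G_\infty$ satisfy $a\cdot b=\zero$. My first attempt is a compactness argument. For each $i$, consider the set of pairs $\klam{a,b}$ with $a\in F_i$, $b\in G_i$ and $a\cdot b=\zero$. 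I would try to show that these sets cannot shrink to empty in the limit, using the compactness of the basic open sets $h(a)$ from Theorem \ref{thm:repDL}.
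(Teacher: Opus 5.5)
Your treatment of completeness and of the co-implication is correct, and it is essentially the paper's argument: $\Cl(R\setminus S)$ is symmetric, it is the least closed $T$ with $R\subseteq S\cup T$, and adjoining $\Cl(1')$ gives the least such $T$ in $\rsc$. Your candidate atoms $\Cl(1')\cup(\da F\times\da G)\cup(\da G\times\da F)$ are exactly the ones the paper names. Your check that such a set is an atom whenever $\klam{F,G}$ is minimal in $(X\times X)\setminus\Cl(1')$ is also fine, and conversely every atom arises this way.

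\textbf{The gap.} The step you flag yourself, the existence of a minimal pair, cannot be closed. A descending chain of pairs outside $\Cl(1')$ can have its intersection inside $\Cl(1')$, and in fact $\rsc$ need not be atomic at all. The paper's proof does not settle this point either: it only describes what atoms must look like and never shows that every $R\neq\Cl(1')$ lies above one.

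\textbf{Counterexample.} Let $L=(\omega\times\set{0,1})\cup\set{\one}$ be the product of the chain $\omega$ with the two-element chain, with a new top $\one$ adjoined and $\zero=\klam{0,0}$.
\begin{itemize}
\item Since $\one$ is join-irreducible, the prime filters are $\set{\one}$, $Q\defeq\ua{\klam{0,1}}$, and $P_k\defeq\ua{\klam{k,0}}$ for $k\geq 1$. They satisfy $P_1\supsetneq P_2\supsetneq\cdots$ and $\bigcap_k P_k=\set{\one}$.
\item Because $\klam{k,0}\cdot\klam{0,1}=\zero$, each $\klam{P_k,Q}$ lies outside $\Cl(1')=p(\overl)$. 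However, $\klam{\set{\one},Q}\in\Cl(1')$. So the chain $\klam{P_k,Q}$ has its limit in $\Cl(1')$.
\item No compactness argument can help. The only witnesses for $\klam{P_k,Q}$ are the pairs $\klam{\klam{m,0},\klam{0,1}}$ with $m\geq k$, and these all disappear in the limit.
\item The pairs outside $\Cl(1')$ are exactly $\klam{P_k,Q}$ and $\klam{Q,P_k}$, so none of them is minimal.
\end{itemize}
Consequently $\rsc$ is the chain $\Cl(1')\subsetneq\cdots\subsetneq R_3\subsetneq R_2\subsetneq R_1=X\times X$, where $R_k\defeq\Cl(1')\cup\set{\klam{P_j,Q},\klam{Q,P_j}: j\geq k}$ and $\bigcap_k R_k=\Cl(1')$. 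Dually, $\ConRel$ consists of $\overl$ together with the contacts $\con_k\defeq\overl\cup(P_k\times Q)\cup(Q\times P_k)$. This is a complete chain, hence a complete co-Heyting algebra, and it has no atoms.

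\textbf{What can be salvaged.} The atomicity clause needs an extra hypothesis that guarantees minimal pairs. One such hypothesis is that $\Prim(L)$ has no infinite strictly descending chains; this holds for finite $L$, and for Boolean $L$, where $\Prim(L)$ is an antichain. Under such a hypothesis your argument finishes the proof; without it, the word ``atomic'' must be dropped from the statement.
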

\begin{proof}
All that is left to show is that $R \leftarrow S$ is the dual relative complement in $\rsc$, and to exhibit the atoms of $\rsc$. $\Cl(R \setminus S)$ is symmetric by Lemma \ref{lem:sym}, and it is the smallest closed relation $T$ on $\Prim(L)$ with $R \subseteq S \cup T$ by  \eqref{clcH}. Since $\Cl(1')$ is the smallest closed, reflexive, and symmetric relation, $R \leftarrow S$ is the smallest closed, reflexive, and symmetric $T$ for which $R \subseteq S \cup T$.

The atoms of $\rsc$ have the form $p(\overl) \cup (\da{F} \times \da{G}) \cup (\da{G} \times \da{F})$. This follows from the fact that the minimal closed nonzero subsets of $\Prim(L)$ have the form $\da{F}$ for $F \in \Prim(L)$ and that the closure of a product of two nonempty sets is the product of the closures.
\end{proof}

\pagebreak

Using Theorem \ref{thm:dr} we immediately obtain the algebraic structure of $\ConRel$:
\begin{theorem}\label{th:co-Heyting}
$\ConRel$ is a complete atomic co-Heyting algebra with smallest element~$\overl$, largest element $L^+ \times L^+$, and for $\frR \defeq \set{\con_i: i \in I}\subseteq\ConRel$
\begin{align*}
\bigvee \frR &= q\left(\bigvee\set{p(\con_i): i \in I}\right), \\
\bigwedge \frR &= q\left(\bigwedge\set{(p(\con_i): i \in I}\right), \\
\con \leftarrow \con' &= q(p(\con) \leftarrow p(\con')).
\end{align*}
\end{theorem}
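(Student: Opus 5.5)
The plan is to transport the structure of $\rsc$ from Theorem~\ref{thm:rscrep} to $\ConRel$ along the bijection $p$ of Theorem~\ref{thm:dr}. The key observation is that $p$ and $q$ are mutually inverse and both isotone, as noted after the example following Theorem~\ref{thm:dr}. Hence $p\colon \langle\ConRel,\subseteq\rangle \to \langle\rsc,\subseteq\rangle$ is an order isomorphism. Every lattice-theoretic notion defined purely in terms of the order is therefore preserved and reflected by $p$ and $q$: arbitrary suprema and infima, least and greatest elements, atoms, and the dual relative complement, since $x\leftarrow y$ is the least $z$ with $x\leq y\vee z$. Thus $\ConRel$ is a complete atomic co-Heyting algebra, because $\rsc$ is one by Theorem~\ref{thm:rscrep}. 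The operations are obtained by conjugation, $\bigvee\frR = q(\bigvee p[\frR])$, and similarly for $\bigwedge$ and $\leftarrow$, which are exactly the displayed formulas.

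First I verify that $p$ is an order isomorphism. Isotonicity of $p$ and $q$ is immediate from \eqref{def:p} and \eqref{def:q}. Since $q\circ p=\mathrm{id}$ and $p\circ q=\mathrm{id}$ by Theorem~\ref{thm:dr}, we have $\con_1\subseteq\con_2$ iff $p(\con_1)\subseteq p(\con_2)$. Next I check the formula for the dual relative complement. For $\con,\con',\con''\in\ConRel$ we have $\con\subseteq \con'\vee\con''$ iff $p(\con)\subseteq p(\con')\vee p(\con'')$, because $p$ preserves binary joins. So the least $\con''$ with this property is $q$ of the least $T\in\rsc$ with $p(\con)\subseteq p(\con')\vee T$, namely $q(p(\con)\leftarrow p(\con'))$. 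The distributivity law characterizing co-Heyting algebras, or equivalently the existence of $\leftarrow$, is then inherited.

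It remains to identify the bounds. By Lemma~\ref{lem:poverl}, $p(\overl)=\min(\rsc)=\Cl(1')$, so $q(\Cl(1'))=\overl$ is the smallest element. The largest element of $\rsc$ is $\Prim(L)\times\Prim(L)$. Its image under $q$ is $\bigcup\{F\times G\}$ over all pairs of prime filters, which is $L^+\times L^+$: every nonzero element lies in some prime filter, and $\zero$ lies in none. This also agrees with $\con_{\max}$ being the largest contact relation.

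I see no real obstacle here. The only point needing care is that the join in $\rsc$ is $\Cl(\bigcup)$ rather than the union, and the meet of the relations $p(\con_i)$ is their intersection. So the displayed formulas must be read with the operations of $\rsc$ from \eqref{clcH}, and the proof simply invokes that $p$ is an order isomorphism rather than computing unions directly.
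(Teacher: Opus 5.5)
Your proposal is exactly the paper's argument: $p$ and $q$ are mutually inverse isotone maps, so $p\colon\ConRel\to\rsc$ is an order isomorphism, and completeness, the bounds, $\leftarrow$ and the displayed formulas are transported from Theorem~\ref{thm:rscrep}, which is what the paper means by ``using Theorem~\ref{thm:dr} we immediately obtain''. One caveat applies equally to the paper: the word ``atomic'' rests entirely on Theorem~\ref{thm:rscrep}, and it fails in general, e.g.\ for $L=(\omega\times\omega)\oplus\one$ (a new top adjoined), where every $\con\supsetneq\overl$ has $(n,0)\con(0,m)$ for some $n,m\geq 1$ and hence, by \eqref{C2} and \eqref{C3}, contains the contact relation $\con_{n,m}\defeq\overl\cup(\ua{(n,0)}\times\ua{(0,m)})\cup(\ua{(0,m)}\times\ua{(n,0)})$, while $\overl\subsetneq\con_{n+1,m}\subsetneq\con_{n,m}$, so $\ConRel$ has no atoms at all.
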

Similarly as in the case of Theorem~\ref{th:max-gen-of-Th-1-DW} and Theorem 1 from \cite{dw_cl}, theorems~\ref{thm:rscrep} and \ref{th:co-Heyting}
generalize directly Theorem~3 and Corollary~1 from the aforementioned work.

$\ConRel$ is usually not closed under intersection, as a simple example in \cite[p. 105]{dw_cl} shows. However, if $\frR$ is a descending chain, then meet and intersection coincide.
Several axioms other than \eqref{C0}--\eqref{C4} were considered in the literature. The \emph{strong} or \emph{interpolation axiom} is as follows with ${}^\ast$ as (pseudo-)complement:\footnote{For a history of the strong axiom see, for example, \cite{Naimpally-et-al-PS}.}
\begin{gather}\tag{Str}\label{C6}
(\forall x,y)[x \notcon y \Implies (\exists z)(x\notcon z \tand y\notcon z^\ast)].
\end{gather}
In \cite[Theorem 10(3)]{Duntsch-et-al-RBTODSAPA} it was proved that in the case of Boolean algebras, \eqref{C6} is equivalent to transitivity of the corresponding relation on the space of ultrafilters. Below, we analyze the transitivity property for p-algebras, and we show that for them, the aforementioned equivalence to the strong axiom fails.

\begin{lemma}\label{lem:C(x)=C(x**)}
    If $L$ is a $p$-algebra, $R\in\rsc$ is transitive and $\con\defeq q(R)$, then for all $x,y\in L$
    \[
x\con y\iff x^{\ast\ast} \con y.
    \]
\end{lemma}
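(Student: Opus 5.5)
One direction is immediate. Since $x \leq x^{\ast\ast}$ in any p-algebra, \eqref{C3} together with \eqref{C2} gives $x \con y \Implies x^{\ast\ast} \con y$. So the work is in the converse, $x^{\ast\ast}\con y \Implies x\con y$, and here transitivity of $R$ together with Lemma~\ref{lem:poverl} and Corollary~\ref{cor:max-from-trans} will do the job.

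Assume $x^{\ast\ast}\con y$. Since $\con = q(R)$, by definition of $q$ there are prime filters $F,G$ with $\klam{F,G}\in R$, $x^{\ast\ast}\in F$ and $y\in G$. By Corollary~\ref{cor:max-from-trans}, which applies since $R$ is transitive and closed, there is a maximal filter $F'\supseteq F$ with $\klam{F',G}\in R$. We still have $x^{\ast\ast}\in F'$. By Lemma~\ref{lem:max}(3), a maximal filter containing $a^{\ast\ast}$ contains $a$, so $x\in F'$. Then $\klam{x,y}\in F'\times G\subseteq q(R)$, i.e.\ $x\con y$.

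The only substantive point is the use of Corollary~\ref{cor:max-from-trans}, and that corollary is itself an application of Lemma~\ref{lem:poverl}. The pair $\klam{F',F}$ lies in $p(\overl)$ because the two filters have the common upper bound $F'$. Lemma~\ref{lem:poverl} then says $p(\overl) = \min(\rsc)\subseteq R$, so $\klam{F',F}\in R$. Transitivity of $R$ with $\klam{F,G}\in R$ yields $\klam{F',G}\in R$. The remaining step is routine: $F$ is proper because $x^{\ast\ast}\in F$ forces $x^{\ast\ast}\neq\zero$, so $F$ extends to a maximal filter by Zorn's Lemma, and that maximal filter is prime.
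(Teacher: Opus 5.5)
Your proof is correct and follows the paper's argument. You pick $\klam{F,G}\in R$ witnessing $x^{\ast\ast}\con y$, pass to a maximal $F'\supseteq F$ with $\klam{F',G}\in R$ via Corollary~\ref{cor:max-from-trans}, and get $x\in F'$ from Lemma~\ref{lem:max}(3). The one minor difference is that you read off $\klam{F,G}\in R$ directly from the definition of $q$. The paper instead goes through Lemma~\ref{lem:CG} and then tacitly needs Lemma~\ref{lem:for-dr} to get back into $R$, so your route is slightly more direct.
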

\begin{proof}
($\Rightarrow$) If $x\con y$, then $x^{\ast\ast}\con y$ by $x\leq x^{\ast\ast}$ and \eqref{C3}.

\smallskip

($\Leftarrow$) Suppose that $x^{\ast\ast} \con y$. Thus, by Lemma~\ref{lem:CG}, there are prime filters $F$ and $G$ such that $\klam{x^{\ast\ast},y}\in F\times G\subseteq\con$. Thus, $\klam{F,G}\in R$, and, by Corollary~\ref{cor:max-from-trans}, transitivity of $R$ implies that there must be
a~maximal filter $F'$ such that $x^{\ast\ast}\in F'$, and $F'\times\{y\}\subseteq\con$. Being maximal, $F'$ is prime, and by Lemma~\ref{lem:max}(3), $x\in F'$, so $x\con y$.
\end{proof}

Thus, as we can see, transitivity of $R\in\rsc$ entails that for $\con\defeq q(R)$ and for any region $x$ we have $\con(x)=\con(x^{\ast\ast})$. This is, however, a property stronger than the transitivity of $\con$ in general.  For example, in any Boolean algebra we obviously have $\con(x)=\con(x^{\ast\ast})$, but $p(\con)$ may not be a transitive relation.

One more observation follows from Lemma~\ref{lem:C(x)=C(x**)}.

\begin{corollary}\label{cor:ext-p-algebra-to-BA}
    If $\klam{L,\con}$ is a p-algebra, $R\in\rsc$ is transitive and $\con\defeq q(R)$ is extensional, then $L$ is a Boolean algebra.
\end{corollary}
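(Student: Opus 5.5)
We will show that $x^{\ast\ast}=x$ for every $x\in L$. Since $x\mapsto x^{\ast\ast}$ then becomes the identity, $L=\bL$, and by Lemma~\ref{lem:propL}(1) $\bL$ is a Boolean algebra whose operations agree with those of $L$ on $\bL$ (with $a\land b = a\cdot b$, $a\lor b=(a+b)^{\ast\ast}=a+b$ and $-a=a^\ast$). Hence $L$ is Boolean, with complement given by ${}^\ast$.

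First we invoke Lemma~\ref{lem:C(x)=C(x**)}. Since $R\in\rsc$ is transitive and $\con=q(R)$, that lemma gives, for all $x,y\in L$, that $x\con y$ if and only if $x^{\ast\ast}\con y$. In the notation $\con(x)=\set{y: x\con y}$ this means $\con(x^{\ast\ast})=\con(x)$, and in particular $\con(x^{\ast\ast})\subseteq\con(x)$.

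Next we apply extensionality \eqref{C5} of $\con$ to this inclusion. It yields $x^{\ast\ast}\leq x$. Combined with the standard p-algebra fact $x\leq x^{\ast\ast}$ (which follows from \eqref{def:*}, since $x\cdot x^\ast=\zero$), this gives $x=x^{\ast\ast}$ for all $x$. This is the identity we set out to prove.

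To conclude Booleanness directly, we take $x\in L$ and check that $x+x^\ast=\one$. We have $(x+x^\ast)^\ast = x^\ast\cdot x^{\ast\ast}=\zero$, using the De Morgan law $(a+b)^\ast=a^\ast b^\ast$ valid in p-algebras. Therefore $x+x^\ast=(x+x^\ast)^{\ast\ast}=\zero^\ast=\one$. Together with $x\cdot x^\ast=\zero$, this shows $x^\ast$ is a complement of $x$. A bounded distributive lattice in which every element has a complement is a Boolean algebra.

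There is no real obstacle. The only point needing care is the direction of the inclusion fed into \eqref{C5}: we use $\con(x^{\ast\ast})\subseteq\con(x)$, so that extensionality gives $x^{\ast\ast}\leq x$ and not the trivial reverse inequality.
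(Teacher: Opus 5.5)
Your proof is correct and follows the paper's argument. You use Lemma~\ref{lem:C(x)=C(x**)} to get $\con(x^{\ast\ast})=\con(x)$, apply \eqref{C5} to obtain $x^{\ast\ast}\leq x$ and hence $x=x^{\ast\ast}$, and conclude that $L$ is Boolean. You also spell out two points the paper leaves implicit: the direction of the inclusion used in extensionality, and the check that $x+x^\ast=\one$, so that $x^\ast$ is a complement of $x$.
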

\begin{proof}
    We have $x=x^{\ast\ast}$ for any $x\in L$ as a consequence of extensionality of $\con$. This suffices to conclude that $L$ is a Boolean algebra.
\end{proof}

\begin{theorem}
    If $L$ is a $p$-algebra, $R\in\rsc$ is transitive, and $\con\defeq q(R)$, then the interpolation axiom holds for $\con$.
\end{theorem}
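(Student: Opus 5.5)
The plan is to reduce the statement to the Boolean case. We pass to the skeleton $\bL$, which is a Boolean algebra by Lemma~\ref{lem:propL}(1) with complement ${}^\ast$. There we invoke \cite[Theorem 10(3)]{Duntsch-et-al-RBTODSAPA}: transitivity of the relation on ultrafilters corresponding to a contact implies \eqref{C6}. The resulting interpolant is then pulled back to $L$.

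First, let $\con_B$ be the restriction of $\con = q(R)$ to $\bL$. It is a contact relation on $\bL$, because \eqref{C0}--\eqref{C3} are inherited directly. For \eqref{C4}, recall that the join in $\bL$ is $(a+b)^{\ast\ast}$. If $a \con_B (b+c)^{\ast\ast}$, then Lemma~\ref{lem:C(x)=C(x**)} together with \eqref{C2} gives $a \con (b+c)$, and \eqref{C4} in $L$ finishes. Next, let $x,y \in L$ with $x \notcon y$. Applying Lemma~\ref{lem:C(x)=C(x**)} twice, using symmetry in between, we get $x^{\ast\ast} \notcon y^{\ast\ast}$, so $x^{\ast\ast} \notcon_B y^{\ast\ast}$ in $\bL$.

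The core step is to show that the relation $p_B(\con_B) = \set{\klam{U,V}: U \times V \subseteq \con_B}$ on the ultrafilters of $\bL$ is transitive. For this I will use the correspondence between ultrafilters $U$ of $\bL$ and maximal filters $M_U \defeq \set{a \in L: (\exists u \in U)\, u \leq a}$ of $L$. That $M_U$ is a proper filter is clear. Maximality follows from Lemma~\ref{lem:max}(2): if $a \notin M_U$, then $a^{\ast\ast} \notin U$, hence $a^\ast = a^{\ast\ast\ast} \in U \subseteq M_U$. Moreover $M_U \cap \bL = U$.

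Now suppose $U \times V \subseteq \con_B$. By \eqref{C3} and \eqref{C2}, every pair in $M_U \times M_V$ dominates a pair in $U \times V$, so $M_U \times M_V \subseteq \con$. Thus $\klam{M_U, M_V} \in p(\con) = p(q(R)) = R$ by Theorem~\ref{thm:dr}. Conversely, $\klam{M_U,M_V} \in R$ gives $M_U \times M_V \subseteq q(R) = \con$, and restricting to $\bL$ yields $U \times V \subseteq \con_B$. Hence $\klam{U,V} \in p_B(\con_B)$ if and only if $\klam{M_U,M_V} \in R$. Transitivity of $R$ therefore transfers to $p_B(\con_B)$.

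By \cite[Theorem 10(3)]{Duntsch-et-al-RBTODSAPA} applied to $\klam{\bL,\con_B}$, there is $z \in \bL$ with $x^{\ast\ast} \notcon z$ and $y^{\ast\ast} \notcon z^\ast$, since $z^\ast$ is the Boolean complement of $z$ in $\bL$. Because $x \leq x^{\ast\ast}$ and $y \leq y^{\ast\ast}$, \eqref{C3} in contrapositive form, with \eqref{C2}, gives $x \notcon z$ and $y \notcon z^\ast$. This is \eqref{C6}.

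The main obstacle I expect is the identification of $p_B(\con_B)$ with the trace of $R$ on maximal filters. It needs both the bijection between maximal filters of $L$ and ultrafilters of $\bL$, and the equality $p(q(R)) = R$. I should also check that the cited Boolean theorem requires only transitivity of the relation on ultrafilters, and not some further property that is not automatic here.
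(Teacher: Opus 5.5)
Your reduction to the skeleton is viable, but the core step fails as written, because $M_U$, defined as the upward closure of $U$ in $L$, is in general neither maximal nor prime. The claim ``if $a\notin M_U$ then $a^{\ast\ast}\notin U$'' is false. In the chain $\zero<d<\one$ one has $d^\ast=\zero$, $\bL=\set{\zero,\one}$ and $U=\set{\one}$; your $M_U=\set{\one}$ omits $d$ although $d^{\ast\ast}=\one\in U$, and the maximal filter is $\set{d,\one}$. Worse, adjoin a new bottom $\zero$ below the four-element Boolean lattice $\set{p,q,r,\one}$, where $q\cdot r=p$ and $q+r=\one$. Every nonzero element then has pseudocomplement $\zero$, so again $\bL=\set{\zero,\one}$ and $M_U=\set{\one}$. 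This $M_U$ is not prime, since $q+r\in M_U$ but $q,r\notin M_U$. Hence $\klam{M_U,M_U}$ is not a point of $\Prim(L)\times\Prim(L)$, and the appeal to $p(q(R))=R$ yields nothing.

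The repair is to set $M_U\defeq\set{a\in L: a^{\ast\ast}\in U}=r_L^{-1}(U)$. As the preimage of a prime filter under the homomorphism $r_L$ of Lemma~\ref{lem:propL}(1), it is a prime filter. It contains $\frD(L)$, so it is maximal by Lemma~\ref{lem:max}(4), and $M_U\cap\bL=U$. However, elements of this $M_U$ lie \emph{below} elements of $U$, so $M_U\times M_V\subseteq\con$ no longer follows from \eqref{C3}. You must instead use Lemma~\ref{lem:C(x)=C(x**)} in both coordinates: $a^{\ast\ast}\con b^{\ast\ast}$ implies $a\con b$. With that change, the following all go through:
\begin{itemize}
\item your equivalence $\klam{U,V}\in p_B(\con_B)\iff\klam{M_U,M_V}\in R$;
\item the transfer of transitivity;
\item the pull-back of the interpolant via \eqref{C3}.
\end{itemize}
The proof is then correct, modulo the cited Boolean theorem.

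It is also a genuinely different route from the paper's, which argues directly by contraposition. The paper separates the ideal $\set{d: a\notcon d}$ from the filter $\set{d: d^\ast\notcon b}$ by a prime filter $U$. It uses Lemma~\ref{lem:C(x)=C(x**)} to get $\ua{a}\times U\subseteq\con$ and $U\times\ua{b}\subseteq\con$. It then extends $\ua{a}$ and $\ua{b}$ to prime filters $H,G$ with $\klam{H,U},\klam{U,G}\in R$, and applies transitivity once. The paper's proof is self-contained. Yours outsources the core to the Boolean result, whose relevant direction is essentially the same separation argument with ${}^{\ast\ast}$ the identity, and pays for it with the filter bookkeeping that went wrong here. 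In exchange, it makes explicit that under transitivity $\con$ is determined by its trace on $\bL$.
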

\begin{proof}
    For the proof by contraposition, let us assume that $a$ and $b$ are such that $(\dagger)$ for every $c\in L$, $a\con c$ or $c^{\ast}\con b$. Let $I\defeq\{d\in L: a\notcon  d\}$ and $F\defeq\{d\in L\mid d^{\ast}\notcon  b\}$. By $(\dagger)$, $I\cap F=\emptyset$. Since $I$ is an ideal and $F$ is a filter, there is a prime filter $U\supseteq F$ that is disjoint from $I$. We have ($\ddagger$) $\upop a\times U\subseteq\con$. Indeed, if $y\in U$, then $y\notin I$ and so $a\con y$. Therefore, $\{a\}\times U\subseteq \con$, and by \eqref{C3} we obtain ($\ddagger$). Further, we also have  ($\maltese$) $U\times\upop b\subseteq\con$. Indeed, if $x\in U$ is such that $x\notcon  b$, then $x^{\ast\ast}\notcon  b$ by Lemma~\ref{lem:C(x)=C(x**)}. So $x^{\ast}\in F$ by the definition of $F$, and in consequence $x^{\ast}\in U$. But then, both $x$ and $x^{\ast}$ are in $U$, which is a contradiction.

    So, with $(\dagger)$ and $(\maltese)$, we expand $\upop a$ and $\upop b$ to prime filters, respectively, $H$ and $G$ such that $H\times U\subseteq\con$ and $U\times G\subseteq\con$. Thus, $\klam{H,U}\in R$ and $\klam{U,G}\in R$, and by the transitivity of $R$ we obtain $\klam{H,G}\in R$, i.e., $H\times G\subseteq\con$. This implies that $a\con b$, as required.
\end{proof}
The next example shows that the converse does not necessarily hold.
\begin{example}
Consider the following p-algebra $L$:
\begin{gather*}\label{fig:Stone}
\xymatrix{
& \one  &  & \\
& c \ar@{->}[u]&  \\
a \ar@{->}[ru] && \ar@{->}[lu] b \\
& \zero \ar@{->}[lu] \ar@{->}[ru] &
}
\end{gather*}
with the $\overl$ relation on $L^+$. Since $x \cdot y = \zero$ \tiff  $\zero \in \set{x,y}$ or $\set{x,y} = \set{a,b}$ it is easily seen that $\overl$ satisfies \eqref{C6}. The prime filters of $L$ are $\ua{a}, \ua{b}$, and $\set{\one}$. Now, $\ua{a} \times \set{\one} \subseteq \overl$ and $\set{\one} \times \ua{b} \subseteq \overl$, but $\ua{a} \times \ua{b} \not\subseteq \overl$. This shows that $p(\overl)$ is not transitive.
\QED\end{example}
\section{Summary and outlook}

We have investigated contact relations on bounded distributive lattices. In particular, we have argued that a  topological representation as for Boolean contact algebras is not feasible in the more general situation. Instead, generalizing results from \cite{Duntsch-et-al-RBTODSAPA} we have shown that a discrete representation relating contact relations on $L$ with closed reflexive and symmetric relations on the space of prime filters of $L$ is possible. Using this result we have shown that the contact relations on $L$ form a complete atomic co-Heyting algebra, thus extending a result from \cite{dw_cl}. Finally, we have exhibited a connection between a contact relation on a p-algebra satisfying the strong axiom \eqref{C6} and the transitivity of its counterpart relation on $\Prim(L)$. In future work we shall consider further axioms for contact relations on bounded distributive lattices which were investigated on Boolean contact algebras such as extensionality \eqref{C5} and connectivity, i.e.
\begin{gather}\tag{Connect}\label{C7}
x+y = \one \timplies x \con y.
\end{gather}
Considering \eqref{C6}, we note that it requires some form of complementation, and it is of interest to consider formulations on a DLC which reduce to \eqref{C6} on p-algebras. One such form is as follows: Since $u \cdot z = \zero \Iff u \leq z^*$, by \eqref{C3} we have
\begin{align}
y(-\con)z^* &\Iff (\forall u)[u \cdot z = \zero \timplies y(-\con)u], \\
y \con z^* &\Iff (\exists u)[u \cdot z = \zero \tand y \con u].
\end{align}
This way, \eqref{C6} becomes
\begin{gather}\tag{Str'}\label{C6'}
(\forall x,y)[x(-\con)y \Implies (\exists z)(x(-\con)z \tand \underbrace{(\forall u)(u \cdot z = \zero \timplies y(-\con)u)}_{y(-\con)z^*})].
\end{gather}
We also intend to investigate the effect of \eqref{C6'} on the corresponding prime filter relation of a DLC.

\section*{Acknowledgements}
This research was funded by the National Science Center (Poland), grant number 2020/39/B/HS1/00216, ``Logico-philosophical foundations of geometry and topology''. Ivo D{\"u}ntsch gratefully acknowledges the hospitality and support of the Department of Logic in the Institute of Philosophy, Nicolaus Copernicus University in Toru\'n.

\bibliographystyle{apalike}

\providecommand{\noop}[1]{}

\end{document}